\theoremstyle{definition} \newtheorem{defi}{Definition}[section]
\theoremstyle{plain} \newtheorem{thm}[defi]{Theorem}
\theoremstyle{plain} \newtheorem{lm}[defi]{Lemma}
\theoremstyle{plain} \newtheorem{prop}[defi]{Proposition}
\theoremstyle{plain} \newtheorem{cor}[defi]{Corollary}
\theoremstyle{definition}
\newcommand{\N}[0]{\mathbb{N}}
\newcommand{\Irr}{\textrm{Irr}}
\newcommand{\cd}{\textrm{cd}}
\newcommand{\dl}{\textrm{dl}}
\newcommand{\pos}{\textrm{pos}}
\newcommand{\posmin}{\textrm{pos}_{\min}}
\newcommand{\posmax}{\textrm{pos}_{\max}}
\newcommand{\posind}[2]{[#1:#2]_{\rm{pos}}}
\author{Tobias Kildetoft\footnote{Supported in part by the center of excellence grant 'Center for Quantum Geometry of Moduli Spaces' from the Danish National Research Foundation (DNRF95)} \\ Aarhus University}
\title{Positions of characters in finite groups and the Taketa inequality}
\begin{document}

\maketitle


Mathematics Subject Classification (2010): 20C15.

Keywords: Solvable groups, Derived length, Character degrees.

\section*{Abstract}

We define the position of an irreducible complex character of a finite group as an alternative to the degree. We then use this to define three classes of groups: PR-groups, IPR-groups and weak IPR-groups. We show that IPR-groups and weak IPR-groups are solvable and satisfy the Taketa inequality (ie, that the derived length of the group is at most the number of degrees of irreducible complex characters of the group), and we show that any M-group is a weak IPR-group. We also show that even though PR-groups need not be solvable, they cannot be perfect.

\section{Introduction}

Let $G$ be a finite group and let $\cd(G) = \{\chi(1)\mid\chi\in\Irr(G)\}$ be the set of (irreducible) character degrees of $G$. It is a conjecture that if $G$ is solvable then $\dl(G)\leq|\cd(G)|$ where $\dl(G)$ denotes the derived length of $G$. This inequality is called the Taketa inequality.

One of the first results in the direction of the above inequality was the theorem of Taketa (\cite[Theorem 5.12]{isaacs}) that if $G$ is an M-group then $G$ is solvable and satisfies the Taketa inequality. Some other conditions under which a finite solvable group is known to satisfy the Taketa inequality are $|G|$ odd (\cite[Corollary 16.7]{manzwolf}) and $|\cd(G)|\leq 5$ (\cite[Main Theorem]{lewis01}).

If $G$ is a finite solvable group which does not satisfy any of the above conditions, there are still two known bounds on $\dl(G)$ in terms of $|\cd(G)|$, namely $\dl(G)\leq 2|\cd(G)| - 3$ (\cite[Theorem 1]{kildetoft12}) and $\dl(G)\leq |\cd(G)| + 24\rm{log}_2(|\cd(G)|) + 364$ (\cite[Theorem 3.6]{keller03}). The latter is the better of the two bounds when $|\cd(G)|\geq 588$.

In this paper, we will define the position of an irreducible character of $G$ as an alternative to the degree, and use these positions to define certain classes of groups, which we call position reducible groups (PR-groups), inductively position reducible groups (IPR-groups) and weak IPR-groups.

We then show that any IPR-group is a weak IPR-group and that any weak IPR-group is solvable and satisfies the Taketa inequality. We also show that if $G$ is an M-group then $G$ is a weak IPR-group, and we conjecture that in fact $G$ is an IPR-group.

PR-groups need not be solvable, but we show that if $G$ is a PR-group, then the derived subgroup of $G$ is not perfect (and hence neither is $G$). We also show that if $G$ is a solvable PR-group with at least $6$ character degrees, then $\dl(G)\leq 2|\cd(G)| - 4$.

\subsection*{Acknowledgements}
I would like to thank J\o rn B Olsson for acting as advisor on my master's thesis, in which the ideas of this paper first emerged.

I would also like to thank Mark L. Lewis for reading an early version of the paper and providing helpful comments.

\section{Notation and preliminaries}

In this paper, $G$ is a finite group, and character means complex character. We will use the following notation.
\begin{itemize}
\item $\Irr(G)$ is the set of irreducible characters of $G$.
\item $\rm{Lin}(G)$ is the set of linear characters of $G$ (i.e. the characters of degree $1$).
\item $\Irr(G|G')$ is the set of non-linear irreducible characters of $G$ (following the notation of \cite{isaacsknutson98}).
\item $\cd(G) = \{\chi(1)\mid\chi\in\Irr(G)\}$ is the set of character degrees of $G$.
\item $\varphi^G$ is the character of $G$ induced from $\varphi$ when $\varphi$ is a character of $H$ for some $H\leq G$.
\item $\chi_H$ is the restriction of $\chi$ to $H$ when $\chi\in\Irr(G)$ and $H\leq G$.
\item $[\chi,\psi]$ is the usual normalized inner product of the characters $\chi$ and $\psi$ of $G$. 
\end{itemize}

The following lemmas will be used several times in the paper.

\begin{lm}\label{lm1}Let $H\leq G$ and let $\varphi$ be a character of $H$. 

Then we have $$\rm{ker}(\varphi^G) = \bigcap_{g\in G}\rm{ker}(\varphi)^g$$ In particular, we have $\rm{ker}(\varphi^G)\leq H$.
\end{lm}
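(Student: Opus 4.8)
The plan is to argue directly from the induction formula together with the standard description of the kernel of a character. Write $K=\ker(\varphi)\trianglelefteq H$ and recall that
\[
\varphi^G(x)=\frac{1}{|H|}\sum_{\substack{g\in G\\ g^{-1}xg\in H}}\varphi(g^{-1}xg),
\]
so in particular $\varphi^G(1)=[G:H]\,\varphi(1)$.

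First I would recall the elementary fact that for any character $\psi$ of a finite group, $\ker(\psi)=\{x:\psi(x)=\psi(1)\}$ and $|\psi(x)|\le\psi(1)$ for all $x$: the eigenvalues of a representing matrix at $x$ are roots of unity, so their sum $\psi(x)$ has modulus at most $\psi(1)$, with equality to $\psi(1)$ exactly when all eigenvalues are $1$, i.e. $x\in\ker(\psi)$. Applied to $\varphi$, this gives $|\varphi(h)|\le\varphi(1)$ for $h\in H$, with equality iff $h\in K$; applied to $\varphi^G$, it reduces the problem to determining when $\varphi^G(x)=\varphi^G(1)$.

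Next I would run the chain of estimates
\[
|\varphi^G(x)|\le\frac{1}{|H|}\sum_{\substack{g\in G\\ g^{-1}xg\in H}}|\varphi(g^{-1}xg)|\le\frac{|\{g\in G:g^{-1}xg\in H\}|}{|H|}\,\varphi(1)\le[G:H]\,\varphi(1)=\varphi^G(1),
\]
and observe that $x\in\ker(\varphi^G)$ iff all three inequalities are equalities. Equality in the third forces $g^{-1}xg\in H$ for every $g\in G$; granting that, equality in the second forces $\varphi(g^{-1}xg)=\varphi(1)$, i.e. $g^{-1}xg\in K$, for every $g\in G$; and once every term equals the positive real number $\varphi(1)$, equality in the first (triangle) inequality holds automatically. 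Hence $x\in\ker(\varphi^G)$ iff $g^{-1}xg\in K$ for all $g\in G$, i.e. iff $x\in\bigcap_{g\in G}gKg^{-1}=\bigcap_{g\in G}\ker(\varphi)^g$ (reindex $g\mapsto g^{-1}$). Taking $g=1$ gives $\ker(\varphi^G)\le K\le H$, which is the final assertion.

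I do not expect a serious obstacle here, as the proof is just a sequence of sharp triangle-inequality estimates; the only point worth stating carefully is that equality in $|\sum\varphi(g^{-1}xg)|\le\sum|\varphi(g^{-1}xg)|$ comes for free once the other two equalities hold, so it is not an additional constraint. Alternatively one could argue module-theoretically: writing $\varphi^G$ on $\C G\otimes_{\C H}V$, an element $x$ acts as the identity iff it fixes every coset $gH$ (i.e. $x\in gHg^{-1}$ for all $g$) and acts trivially on each summand $g\otimes V$ (i.e. $g^{-1}xg\in K$ for all $g$); but the character computation above is shorter.
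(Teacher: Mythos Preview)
The paper does not actually prove this lemma; it simply cites \cite[Lemma 5.11]{isaacs}. Your direct argument from the induction formula is the standard proof one finds there, so you are in fact supplying more than the paper does.

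There is, however, a slip in the equality analysis. You assert that $|\varphi(h)|=\varphi(1)$ iff $h\in K=\ker(\varphi)$, and later that equality in your second inequality forces $\varphi(g^{-1}xg)=\varphi(1)$. Neither is quite right: the condition $|\varphi(h)|=\varphi(1)$ characterises $h\in Z(\varphi)$ (all eigenvalues equal to a \emph{common} root of unity), not $h\in\ker(\varphi)$ (all eigenvalues equal to $1$). Consequently, equality throughout your chain only gives $|\varphi^G(x)|=\varphi^G(1)$, i.e.\ $x\in Z(\varphi^G)$, and from the second inequality alone you only obtain $g^{-1}xg\in Z(\varphi)$ for all $g$. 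The triangle inequality is then not ``automatic'' but does the remaining work: it forces all the summands $\varphi(g^{-1}xg)$ to lie on a common ray, hence all to equal some fixed $\zeta\varphi(1)$ with $|\zeta|=1$; and since you need $\varphi^G(x)=\varphi^G(1)$ exactly (not merely in modulus), this gives $\zeta=1$ and hence $g^{-1}xg\in K$ for every $g$. A cleaner route that avoids the issue entirely is to compare real parts from the outset: each term $\varphi(1)-\mathrm{Re}\,\varphi(g^{-1}xg)$ (with $\varphi$ extended by zero off $H$) is nonnegative, and the vanishing of their sum forces each to vanish, yielding $g^{-1}xg\in H$ and $\varphi(g^{-1}xg)=\varphi(1)$ directly.
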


\begin{proof}This is Lemma 5.11 in \cite{isaacs}\end{proof}

\begin{lm}\label{lm2}Let $H\leq G$, let $\varphi$ be a character of $H$ and assume that $N$ is a normal subgroup of $G$ with $N\leq \rm{ker}(\varphi)$. 

Then $N\leq \rm{ker}(\varphi^G)$.
\end{lm}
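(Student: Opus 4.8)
The plan is to derive this immediately from Lemma \ref{lm1}. Since $N \leq \ker(\varphi)$, Lemma \ref{lm1} already tells us that $\ker(\varphi^G) = \bigcap_{g\in G}\ker(\varphi)^g$, so it suffices to check that $N$ is contained in every conjugate $\ker(\varphi)^g$. The crucial point is that $N$ is normal in $G$: for any $g \in G$ we have $N = N^g \leq \ker(\varphi)^g$, where the containment is the conjugate of the hypothesis $N \leq \ker(\varphi)$. Intersecting over all $g \in G$ then gives $N \leq \bigcap_{g\in G}\ker(\varphi)^g = \ker(\varphi^G)$, which is exactly the claim.

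There is essentially no obstacle here; the only thing to be careful about is to invoke normality of $N$ in $G$ (not merely in $H$) so that conjugation by arbitrary $g \in G$ fixes $N$. One should also implicitly use that $\ker(\varphi) \leq H$, so that $N \leq H$ and the various conjugates make sense inside $G$, though this is automatic from $N \leq \ker(\varphi)$.

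For completeness one could instead argue directly from the induced character formula, without citing Lemma \ref{lm1}: for $n \in N$ we have $\varphi^G(n) = \frac{1}{|H|}\sum_{\substack{g\in G\\ g^{-1}ng\in H}}\varphi(g^{-1}ng)$, and since $N \trianglelefteq G$ every conjugate $g^{-1}ng$ lies in $N \leq \ker(\varphi) \leq H$, so the sum runs over all of $G$ and each term equals $\varphi(1)$; hence $\varphi^G(n) = [G:H]\varphi(1) = \varphi^G(1)$, giving $n \in \ker(\varphi^G)$. I would, however, keep the short argument via Lemma \ref{lm1} as the main proof, since that lemma has just been recorded and the deduction is a single line.
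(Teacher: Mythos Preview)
Your proof is correct and follows exactly the approach of the paper, which simply records that the claim follows directly from Lemma~\ref{lm1}. Your spelled-out conjugation argument is precisely the implicit reasoning behind that one-line reference, and the alternative computation from the induced-character formula is a valid (but unnecessary) elaboration.
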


\begin{proof}This follows directly from Lemma \ref{lm1}.\end{proof}

\section{Positions of characters}

In this section, we will define the position of an irreducible character of $G$ as an alternative to the degree. We will then use this to associate certain numbers to arbitrary characters and also to subgroups.

\begin{defi}[Position of a character]Let $\cd(G) = \{f_1,f_2,\dots,f_n\}$ with $f_1
< f_2 < \cdots < f_n$. Let $\chi\in\Irr(G)$ with $\chi(1) = f_i$.
Then the position of $\chi$ is defined to be $\pos(\chi) =
i$.\end{defi}

Note that an alternative (but equivalent) way to define the position of an irreducible character $\chi$ is $\pos(\chi) = |\{i\in \cd(G)\mid i\leq \chi(1)\}|$.

Clearly we have $\pos(\chi)\leq \chi(1)$.

Given some group $G$, if we know $\cd(G)$, then for any $\chi\in\Irr(G)$, we have that $\chi(1)$ and $\pos(\chi)$ provide the same information. But if we look at some character ``in isolation'', then the two numbers give us different information. 

Of course, we have that $\pos(\chi) = 1$ if and only if $\chi(1) = 1$, and if $\chi(1) = 2$ then $\pos(\chi) = 2$. But $\pos(\chi) = 2$ need not imply that $\chi(1) = 2$ (in fact, $\chi(1)$ can be arbitrarily large in this situation).

The concept of position of irreducible characters has in fact been used several places in the literature already, though without giving a name or a notation to it. Examples include the precise formulation of the theorem of Taketa on the solvability of M-groups already mentioned, as well as the normal series $D_i(G)$ (\ref{normalseries}). 

One motivation for looking at positions rather than degrees of the characters is that it allows us to make the following definitions much easier.

\begin{defi}[Taketa-character]Let $\chi\in \Irr(G)$ with $\pos(\chi) = i$. We say that $\chi$ is a Taketa-character if $G^{(i)}\leq \rm{ker}(\chi)$.
\end{defi}

It is clear that any linear character is a Taketa-character.

\begin{defi}[Taketa-group]$G$ is said to be a Taketa-group if all the irreducible characters of $G$ are Taketa-characters.\end{defi}

The mentioned theorem of Taketa can now be stated as ``If $G$ is an M-group then $G$ is a Taketa-group'', and this is the reason for the choice of the name.

Clearly, if $G$ is a Taketa-group then $G$ is solvable and
$G$ satisfies the Taketa inequality (since the intersection of the kernels of all the irreducible characters is trivial).

Unlike degrees, it does not make sense to speak of the position of a character if it is not irreducible. There are, however, two distinguished ``positions'' of any character, which we will be interested in.

\begin{defi}[Maximal and minimal position]Let $\chi$ be a character
of $G$. Let $\psi,\varphi\in\Irr(G)$ be constituents of $\chi$
of largest and smallest degrees, respectively. Then we define
$\posmax(\chi) = \pos(\psi)$ and $\posmin(\chi) = \pos(\varphi)$,
the maximal and minimal position of $\chi$.\end{defi}

We can also use these positions to assign a number to any subgroup of $G$. This number will behave a bit like the index of the subgroup, though with some notable exceptions.

\begin{defi}[Positional index of a subgroup]Let $H\leq G$. We define
$$\posind{G}{H} = \min\limits_{\psi\in\Irr(H)}\{\posmax(\psi^G)\}$$ and call it the
positional index of $H$ in $G$.\end{defi}

Thus, $\posind{G}{-}$ is a function from $\{H\, |\, H\leq G\}$ to
$\{1,2,\dots, |\cd(G)|\}$. It has the following properties.

\begin{prop}\label{properties}Let $K$ and $H$ be subgroups of $G$.
\begin{enumerate}[(1)]
\item If $K\leq H$ then $\posind{G}{K}\geq \posind{G}{H}$.
\item $\posind{G}{G} = \posind{G}{G'} = 1$.
\item $\posind{G}{H} = 1 \Leftrightarrow G'\leq H$.
\item $\posind{G}{\{1\}} = |\cd(G)|$.
\item If $H < G$ then $\posind{G}{H} < [G:H]$.
\end{enumerate}
\end{prop}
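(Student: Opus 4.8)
The plan is to prove the five items in order, reducing the later ones to the earlier ones where possible; the only ingredients needed beyond the definitions are transitivity of induction, Frobenius reciprocity, Lemma \ref{lm1}, and the elementary bound $\pos(\chi)\leq\chi(1)$ already noted. For (1), given $\psi\in\Irr(K)$ I would pick any irreducible constituent $\eta$ of the (nonzero) character $\psi^H$; by transitivity of induction $\psi^G=(\psi^H)^G$, so $\eta^G$ is a sub-character of $\psi^G$ and hence every irreducible constituent of $\eta^G$ is also a constituent of $\psi^G$. In particular the largest degree appearing in $\eta^G$ does not exceed that appearing in $\psi^G$, so $\posmax(\eta^G)\leq\posmax(\psi^G)$, and since $\eta\in\Irr(H)$ this gives $\posind{G}{H}\leq\posmax(\psi^G)$; minimizing over $\psi\in\Irr(K)$ yields $\posind{G}{H}\leq\posind{G}{K}$.

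For (2), take $\psi=1_G$ for the first equality ($\psi^G=1_G$ is linear, so $\posmax(\psi^G)=1$, and $\posind{G}{-}\geq 1$ always) and $\psi=1_{G'}$ for the second: since $G'\trianglelefteq G$, $(1_{G'})^G$ is the inflation to $G$ of the regular character of the abelian group $G/G'$, so all its constituents are linear and $\posmax((1_{G'})^G)=1$. For (3), the implication $G'\leq H\Rightarrow\posind{G}{H}=1$ is immediate from (1) and (2). For the converse, if $\posind{G}{H}=1$ then some $\psi\in\Irr(H)$ has $\posmax(\psi^G)=1$, so the largest-degree irreducible constituent of $\psi^G$ has position $1$ and hence degree $1$; thus every constituent of $\psi^G$ is linear, so $G'\leq\ker(\psi^G)$, and Lemma \ref{lm1} gives $\ker(\psi^G)\leq H$, whence $G'\leq H$.

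For (4), $\Irr(\{1\})$ consists of the trivial character alone, and $(1_{\{1\}})^G$ is the regular character of $G$, which contains every irreducible character of $G$; its largest-degree constituent has degree $\max\cd(G)$ and thus position $|\cd(G)|$, so $\posind{G}{\{1\}}=|\cd(G)|$. For (5), I would use $\psi=1_H$: by Frobenius reciprocity $1_G$ occurs in $(1_H)^G$ with multiplicity $1$, so $(1_H)^G=1_G+\theta$ with $\theta$ a character of degree $[G:H]-1\geq 1$; hence every irreducible constituent of $(1_H)^G$ has degree at most $[G:H]-1$, and the largest such constituent $\psi_0$ satisfies $\posmax((1_H)^G)=\pos(\psi_0)\leq\psi_0(1)\leq[G:H]-1<[G:H]$, giving $\posind{G}{H}\leq\posmax((1_H)^G)<[G:H]$.

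I expect (5) to be the step needing the most care, since it is exactly where the positional index stops behaving like the ordinary index: the strict inequality comes from combining the fact that the trivial constituent is ``wasted'' in $(1_H)^G$ with the crude estimate $\pos\leq\deg$. The remaining four items are essentially formal consequences of the definitions together with Lemma \ref{lm1}.
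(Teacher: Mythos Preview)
Your proposal is correct and follows essentially the same approach as the paper: for (1) you pass from $\Irr(K)$ to $\Irr(H)$ via a constituent of the induction to $H$ and use transitivity of induction; for (2)--(4) you use the same test characters $1_{G'}$, $1_{\{1\}}$ and the same appeal to Lemma \ref{lm1}; and for (5) you use $1_H$ together with the bound $\pos(\chi)\leq\chi(1)$, just as the paper does. The only cosmetic difference is that you handle $\posind{G}{G}=1$ directly in (2) rather than deducing it from (1), and you make the Frobenius reciprocity step in (5) explicit.
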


\begin{proof}
\begin{enumerate}[(1)]
\item Let $\varphi\in\Irr(K)$ minimize $\posmax((-)^G)$ and let $\psi\in\Irr(H)$ be a constituent of $\varphi^H$. Now $\psi^G$ is a constituent of $\varphi^G$ so $\posmax(\psi^G)\leq \posmax(\varphi^G)$ which proves the claim.
\item By (1) it is enough to show that $\posind{G}{G'} = 1$. But we have that $(1_{G'})^G = \sum\limits_{\chi\in\rm{Lin}(G)}\chi$ so this is clear.
\item One direction is clear by (1) and (2). So assume that $\posind{G}{H} = 1$. This means that there is some $\varphi\in\Irr(H)$ such that $\varphi^G$ is a sum of linear characters. But then $G'\leq\rm{ker}(\varphi^G)\leq H$ by Lemma \ref{lm1} as claimed.
\item This is clear since $(1_{\{1\}})^G$ is the regular character of $G$ which has all irreducible characters as constituents.
\item This follows because $(1_H)^G$ has degree $[G:H]$ and has $1_G$ as a constituent, so the degree of any constituent is strictly less than $[G:H]$ and hence so is the position.
\end{enumerate}
\end{proof}

We will be interested in when it is possible to find a subgroup $H$ such that $\posind{G}{H}$ is small compared to the positions of certain characters of $H$ (made precise below). Note that given some $\varphi\in\Irr(H)$, the number $\posmax(\varphi^G)$ is smallest if $\varphi^G$ splits into as many constituents as possible.

When calculating $\posind{G}{H}$ it would be nice if we knew something about which characters $\varphi\in\Irr(H)$ minimize $\posmax(\varphi^G)$. Unfortunately, it is not easy to say much about this. It is for example not always the case that it is minimized by a linear character. An example of this can be found by looking at the group $G = \rm{SL}_2(3)\times C_2$, which has a subgroup $H$ of order $8$ isomorphic to $Q_8$ such that $\posmax(\varphi^G)$ is minimized by a character $\varphi\in\Irr(H)$ with $\pos(\varphi) = 2$ (and it is strictly larger for all the linear characters of $H$).

%

\begin{defi}[Position reducing tuple (PRT)]Let $H < G$,
$\chi\in\Irr(G)$ and $\varphi\in \Irr(H)$ be a constituent of $\chi_H$. 

$(G,H,\chi,\varphi)$ is said to be a position reducing tuple (PRT) if $\pos(\varphi) + \posind{G}{H}\leq \pos(\chi)$.\end{defi}

We will also call $(G,H,\chi)$ a PRT if there is some constituent $\varphi\in\Irr(H)$ of $\chi_H$ such that $(G,H,\chi,\varphi)$ is a PRT. Note that to check whether $(G,H,\chi)$ is a PRT it is enough to check whether $\posmin(\chi_H) + \posind{G}{H}\leq\pos(\chi)$. The reason we also need to consider PRTs with the constituent $\varphi$ of $\chi_H$ specified is that sometimes, it will be useful to be able to put extra requirements on this constituent.

Also note that allowing $H = G$ in the definition of PRT would not change anything, as $(G,G,\chi)$ can never be a PRT. We have chosen to require $H < G$ for practical reasons that will be apparent when we define IPR-groups (Definition \ref{ipr}).

Part of the motivation behind the definition of a PRT is that if $(G,H,\chi)$ is a PRT, then in some respects, the character $\chi$ behaves as if it was induced from a linear character of $H$ (see also Proposition \ref{monomialcharacter}).

\begin{defi}[Position reducible character (PR-character)]We say that $\chi\in\Irr(G)$ is a position reducible character (PR-character) if there is some $H < G$ such that $(G,H,\chi)$ is a PRT.\end{defi}

\begin{prop}\label{monomialcharacter}If $\chi\in \Irr(G|G')$ is monomial, then $\chi$ is a PR-character.
\end{prop}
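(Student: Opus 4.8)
The plan is to unpack the definition of monomial and exploit Lemma~\ref{lm1} to control kernels. Suppose $\chi \in \Irr(G|G')$ is monomial, so $\chi = \lambda^G$ for some $\lambda \in \Irr(H)$ with $\lambda(1)=1$, where $H < G$ (the inequality is strict since $\chi$ is nonlinear, so $H$ cannot be all of $G$). I want to produce a subgroup witnessing that $\chi$ is a PR-character, and the natural candidate is $H$ itself with the constituent $\lambda$. So I need to verify that $(G,H,\chi,\lambda)$ is a PRT, i.e.\ that
$$\pos(\lambda) + \posind{G}{H} \leq \pos(\chi).$$
Since $\lambda$ is linear, $\pos(\lambda) = 1$, so the inequality to prove reduces to $\posind{G}{H} + 1 \leq \pos(\chi)$, equivalently $\posind{G}{H} < \pos(\chi)$.

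The key observation is that $\posind{G}{H} \leq \posmax(\lambda^G) = \posmax(\chi)$ by definition of the positional index (taking the particular character $\lambda \in \Irr(H)$ in the minimum), and since $\chi$ is irreducible, $\posmax(\chi) = \pos(\chi)$. So this crude bound only gives $\posind{G}{H} \leq \pos(\chi)$, not the strict inequality I need. To get strictness I should instead bound $\posind{G}{H}$ using a \emph{different} character of $H$ — the trivial character. Consider $(1_H)^G$: by Lemma~\ref{lm1}, $\ker((1_H)^G) = \bigcap_{g\in G} H^g$, the normal core of $H$ in $G$. Meanwhile $\ker(\chi) = \ker(\lambda^G) = \bigcap_{g\in G}\ker(\lambda)^g \geq \bigcap_{g\in G}H^g$, so every constituent of $(1_H)^G$ has the core of $H$ in its kernel, and in particular $(1_H)^G$ has more room to split. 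The cleaner route: $(1_H)^G$ has $1_G$ as a constituent, so $\posmin((1_H)^G) = 1$, and hence $\posind{G}{H}$, which is a minimum over all $\psi \in \Irr(H)$ of $\posmax(\psi^G)$, is at most $\posmax((1_H)^G)$. I must argue $\posmax((1_H)^G) < \pos(\chi)$, i.e.\ that every irreducible constituent of $(1_H)^G$ has degree strictly less than $\chi(1) = [G:H]$; but $(1_H)^G$ has degree $[G:H]$ and contains $1_G$ as a constituent, so indeed every constituent has degree $< [G:H] = \chi(1)$, hence position $< \pos(\chi)$. Therefore $\posind{G}{H} \leq \posmax((1_H)^G) < \pos(\chi)$, which combined with $\pos(\lambda)=1$ gives $\pos(\lambda) + \posind{G}{H} \leq \pos(\chi)$ as required.

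The main obstacle I anticipate is exactly the one flagged above: the ``obvious'' estimate $\posind{G}{H} \leq \posmax(\chi)$ is off by one at the boundary, and one must realize that the positional index is defined as a \emph{minimum} over $\Irr(H)$, so one is free to test it against $1_H$ rather than $\lambda$. Once that is seen, the argument is just Proposition~\ref{properties}(5) applied to $H$ (which already records $\posind{G}{H} < [G:H] = \chi(1)$, hence $\posind{G}{H} \leq \pos(\chi)-1$ after translating the degree bound into a position bound via $\pos(1_G)=1$). One small point to be careful about: I should confirm $H < G$ is strict, which holds because $\chi$ is nonlinear and $\chi = \lambda^G$ with $\lambda$ linear forces $[G:H] = \chi(1) > 1$. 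No further case analysis is needed.
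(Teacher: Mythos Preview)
Your proof is correct and follows essentially the same route as the paper: take the inducing pair $(H,\lambda)$, note $\pos(\lambda)=1$, and bound $\posind{G}{H}\leq\posmax((1_H)^G)<\pos(\chi)$ using that $(1_H)^G$ has degree $[G:H]=\chi(1)$ with $1_G$ as a constituent. The only step you leave implicit (as does the paper) is that $\lambda$ is a constituent of $\chi_H$, which is immediate from Frobenius reciprocity since $\chi=\lambda^G$.
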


\begin{proof}Let $H\leq G$ and $\varphi\in\rm{Lin}(H)$ with $\varphi^G = \chi$. We then see that $(G,H,\chi,\varphi)$ is a PRT, since we have $\pos(\varphi) = 1$ and $\posind{G}{H} \leq \posmax((1_H)^G) < \pos(\chi)$ since $1_G$ is a constituent of $(1_H)^G$ and the latter has the same degree as $\chi$.
\end{proof}

As mentioned, it will sometimes be useful to put a further condition on the constituent $\varphi$ of $\chi_H$ in the definition of a PRT. The precise requirement we will need in this paper is the following.

\begin{defi}[Taketa-PR-character]$\chi\in \Irr(G)$ is said to be a Taketa-PR-character if there is a PRT $(G,H,\chi,\varphi)$ such that $\varphi$ is a Taketa-character.
\end{defi}

Since any linear character is a Taketa-character, we see that any monomial $\chi\in\Irr(G|G')$ is also a Taketa-PR-character (by the proof of Proposition \ref{monomialcharacter}).

The following two results say something about when the derived
subgroup can be used to form a PRT.

\begin{lm}\label{xgr1}Let $\chi\in\Irr(G|G')$ and $\varphi\in\Irr(G')$ be a constituent of $\chi_{G'}$. Then
$(G,G',\chi,\varphi)$ is a PRT if and only if $\pos(\varphi) <
\pos(\chi)$.\end{lm}

\begin{proof}
This follows directly from the fact that $\posind{G}{G'} = 1$.
\end{proof}

The following result has proven surprisingly useful in determining whether specific groups were IPR-groups or weak IPR-groups (see later definitions). In specific cases, this result will often be sufficient to prove that a group is a PR-group, and iterating it by taking further derived subgroups until one gets a nilpotent group will then sometimes allow one to conclude that the group is an IPR-group (using Proposition \ref{supersolvable}).

\begin{prop}\label{xgr2}Let $\chi\in\Irr(G|G')$. If $(G,G',\chi)$ is
not a PRT then there exists a $\varphi\in\Irr(G')$ such that
$\pos(\varphi) \geq \pos(\chi)$ and $\varphi(1)t$ divides $\chi(1)$ where
$t$ is the index of the stabilizer of $\varphi$ in $G$.\end{prop}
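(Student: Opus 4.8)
The plan is to use Clifford theory applied to the normal subgroup $G'$ of $G$. Let $\chi \in \Irr(G|G')$ and suppose $(G,G',\chi)$ is not a PRT. By Lemma \ref{xgr1}, this means that \emph{every} constituent $\varphi$ of $\chi_{G'}$ satisfies $\pos(\varphi) \geq \pos(\chi)$ (this is the contrapositive of the ``if and only if'' in that lemma, combined with the fact that $(G,G',\chi)$ being a PRT only requires \emph{some} constituent to work, i.e. the one of smallest degree). So it suffices to fix any constituent $\varphi$ of $\chi_{G'}$; I already get $\pos(\varphi) \geq \pos(\chi)$ for free, and I only need to establish the divisibility statement.

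\textbf{The divisibility statement via Clifford theory.} Since $G' \trianglelefteq G$, Clifford's theorem tells us that $\chi_{G'} = e\sum_{i=1}^{t}\varphi_i$ where the $\varphi_i$ are the $G$-conjugates of $\varphi$, $t = [G:I_G(\varphi)]$ is the size of the orbit (equivalently the index of the stabilizer $I_G(\varphi)$ of $\varphi$ in $G$), and $e = [\chi_{G'},\varphi]$ is the multiplicity. Comparing degrees gives $\chi(1) = et\,\varphi(1)$, so in particular $\varphi(1)t \mid \chi(1)$. That is exactly the divisibility claim. So the core of the argument is just a direct invocation of Clifford's theorem — no real obstacle there.

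\textbf{Assembling the two conclusions.} The only subtlety is making sure the \emph{same} $\varphi$ witnesses both conclusions. Picking $\varphi$ to be any irreducible constituent of $\chi_{G'}$, Clifford theory gives the divisibility $\varphi(1)t \mid \chi(1)$ with $t = [G:I_G(\varphi)]$; and the assumption that $(G,G',\chi)$ is not a PRT, together with Lemma \ref{xgr1} (applied to this $\varphi$), forces $\pos(\varphi) \geq \pos(\chi)$. Hence this single $\varphi$ satisfies both requirements, and we are done. I expect the write-up to be very short — essentially two or three lines — since the content is entirely "Clifford + Lemma \ref{xgr1}."

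\begin{proof}
Since $\chi\in\Irr(G|G')$, the restriction $\chi_{G'}$ has some irreducible constituent $\varphi\in\Irr(G')$; fix one such $\varphi$. As $G'$ is normal in $G$, Clifford's theorem (\cite[Theorem 6.2, Theorem 6.11]{isaacs}) gives that $\chi_{G'} = e\sum_{i=1}^{t}\varphi_i$, where $\varphi_1,\dots,\varphi_t$ are the distinct $G$-conjugates of $\varphi$, the integer $t$ is the index in $G$ of the stabilizer of $\varphi$, and $e = [\chi_{G'},\varphi]$. Comparing degrees on both sides yields $\chi(1) = et\,\varphi(1)$, so in particular $\varphi(1)t$ divides $\chi(1)$.

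It remains to check that $\pos(\varphi)\geq\pos(\chi)$. Since $(G,G',\chi)$ is not a PRT, in particular $(G,G',\chi,\varphi)$ is not a PRT, so by Lemma \ref{xgr1} we cannot have $\pos(\varphi) < \pos(\chi)$; that is, $\pos(\varphi)\geq\pos(\chi)$, as required.
\end{proof}
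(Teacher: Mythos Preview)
Your proof is correct and follows essentially the same approach as the paper: both pick an irreducible constituent of $\chi_{G'}$, use Lemma \ref{xgr1} (via the failure of $(G,G',\chi)$ to be a PRT) to get $\pos(\varphi)\geq\pos(\chi)$, and then apply Clifford's theorem to obtain $\chi(1)=et\,\varphi(1)$ and hence the divisibility. The only cosmetic difference is the order in which the two conclusions are established.
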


\begin{proof}If $(G,G',\chi)$ is not a PRT, then by Lemma \ref{xgr1} we
have that $\posmin(\chi_{G'}) \geq \pos(\chi)$ so there is a
$\psi\in\Irr(G')$ which is a constituent of $\chi_{G'}$ and such
that $\pos(\psi)\geq\pos(\chi)$.

Since $G'$ is normal in $G$, however, we know from Clifford's
Theorem that $\chi_{G'} = e\sum_{i=1}^t\psi_i$ where $e =
[\chi_{G'},\psi]$ and the $\psi_i$ are the different conjugates of
$\psi$ in $G$. In particular, we have that $\chi(1) = \chi_{G'}(1) =
et\psi(1)$, so $\psi(1)t$ divides $\chi(1)$ as claimed.\end{proof}

Another consequence of Lemma \ref{xgr1} is

\begin{prop}\label{taketa2}Let $\chi\in\Irr(G|G')$ be given and assume that
$G''\leq\ker(\chi)$. Then $(G,G',\chi)$ is a PRT.\end{prop}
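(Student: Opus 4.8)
By Lemma \ref{xgr1}, since $\posind{G}{G'} = 1$, it suffices to exhibit a constituent $\varphi\in\Irr(G')$ of $\chi_{G'}$ with $\pos(\varphi) < \pos(\chi)$. The plan is to show that \emph{every} constituent of $\chi_{G'}$ is linear, which will make the position comparison immediate.

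First I would pick any constituent $\varphi\in\Irr(G')$ of $\chi_{G'}$ and argue that $G''\leq\ker(\varphi)$. The key observation is that $G''\leq\ker(\chi)$ together with $G''\leq G'$ gives $G''\leq\ker(\chi)\cap G' = \ker(\chi_{G'})$, and the kernel of $\chi_{G'}$ is contained in the kernel of each of its irreducible constituents: indeed, by Clifford's theorem $\chi_{G'} = e\sum_{i=1}^t\varphi_i$ with the $\varphi_i$ the $G$-conjugates of $\varphi$ (all of the same degree), and if $g$ lies in $\ker(\chi_{G'})$ then $e\sum\varphi_i(g) = e\sum\varphi_i(1)$ forces $\varphi_i(g)=\varphi_i(1)$ for all $i$ since $|\varphi_i(g)|\leq\varphi_i(1)$. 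In particular $G''\leq\ker(\varphi)$. (This is the same elementary fact about kernels already used in the proof of Proposition \ref{xgr2}.)

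Next I would conclude that $\varphi$ factors through the abelian group $G'/G''$, hence $\varphi(1) = 1$ and therefore $\pos(\varphi) = 1$. On the other hand, $\chi\in\Irr(G|G')$ means $\chi$ is non-linear, so $\chi(1) > 1$ and hence $\pos(\chi)\geq 2$. Thus $\pos(\varphi) = 1 < \pos(\chi)$, and Lemma \ref{xgr1} shows $(G,G',\chi,\varphi)$ — and in particular $(G,G',\chi)$ — is a PRT.

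There is no real obstacle here; the only point requiring a moment's care is the justification that a normal subgroup lying in $\ker(\chi)$ also lies in the kernel of an irreducible constituent of $\chi_{G'}$, which follows from Clifford's theorem exactly as above. Everything else is a direct application of Lemma \ref{xgr1} and the fact that $G'/G''$ is abelian.
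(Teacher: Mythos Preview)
Your proof is correct and follows essentially the same route as the paper: reduce to Lemma \ref{xgr1} and show that every irreducible constituent of $\chi_{G'}$ is linear. The only cosmetic difference is that the paper phrases the linearity step via Frobenius reciprocity (since $\chi_{G''}$ is a multiple of $1_{G''}$, each constituent of $\chi_{G'}$ appears in $(1_{G''})^{G'}$, which consists of linear characters), whereas you argue directly via Clifford that $G''\leq\ker(\varphi)$ and then factor through $G'/G''$; these are two standard ways of saying the same thing.
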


\begin{proof}By Lemma \ref{xgr1} we just need to show that
$\posmin(\chi_{G'}) < \pos(\chi)$. In fact we claim that
$\posmin(\chi_{G'}) = 1$.

Since $\chi_{G''}$ is a multiple of $1_{G''}$ we get that all
irreducible constituents of $\chi_{G'}$ are constituents of
$(1_{G''})^{G'}$, but these are all linear, which completes the
proof.\end{proof}

In this paper, there are three main types of characters considered: PR-characters, Taketa-PR-characters and Taketa-characters. The following results shows that for a character of position $2$, these concepts coincide.

\begin{prop}\label{pos2}Let $\chi\in\Irr(G)$ with $\pos(\chi) = 2$. Then the following are equivalent:
\begin{enumerate}[(1)]
\item $\chi$ is a PR-character
\item $\chi$ is a Taketa-PR-character
\item $\chi$ is a Taketa-character
\end{enumerate}
\end{prop}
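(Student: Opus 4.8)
The plan is to prove the cycle $(3)\imp(2)\imp(1)$ first, which is essentially formal, and then to establish $(1)\imp(3)$, which is where the real content lies. For $(2)\imp(1)$: a Taketa-PR-character is by definition a PR-character, since a Taketa-PR-character comes with a PRT $(G,H,\chi,\varphi)$, and then $(G,H,\chi)$ is a PRT. For $(3)\imp(2)$: suppose $\chi$ is a Taketa-character, so $G''\leq\ker(\chi)$ (as $\pos(\chi)=2$). Since $\pos(\chi)=2$ we have $\chi(1)>1$, hence $\chi\in\Irr(G|G')$. By Proposition \ref{taketa2}, $(G,G',\chi)$ is a PRT, and moreover the proof of that proposition shows $\posmin(\chi_{G'})=1$, so we may pick a linear constituent $\varphi\in\Irr(G')$ of $\chi_{G'}$; then $(G,G',\chi,\varphi)$ is a PRT with $\varphi$ linear, hence a Taketa-character, so $\chi$ is a Taketa-PR-character.

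The crux is $(1)\imp(3)$. Assume $\chi$ is a PR-character with $\pos(\chi)=2$, so there is $H<G$ and a constituent $\varphi\in\Irr(H)$ of $\chi_H$ with $\pos(\varphi)+\posind{G}{H}\leq 2$. Since both summands are at least $1$, we must have $\pos(\varphi)=1$ and $\posind{G}{H}=1$. From $\pos(\varphi)=1$ we get that $\varphi$ is linear. From $\posind{G}{H}=1$ and Proposition \ref{properties}(3) we get $G'\leq H$. Now I want to conclude $G''\leq\ker(\chi)$. Since $G'\leq H$ and $\varphi$ is linear, $\ker(\varphi)\supseteq H'\supseteq G''$; that is, $G''$ is contained in $\ker(\varphi)$. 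The key point is that $G''$ is normal in $G$ and contained in $\ker(\varphi)$, so by Lemma \ref{lm2} we have $G''\leq\ker(\varphi^G)$. But $\chi$ is a constituent of $\varphi^G$ (because $\varphi$ is a constituent of $\chi_H$, and by Frobenius reciprocity $[\varphi^G,\chi]=[\varphi,\chi_H]>0$), so $\ker(\varphi^G)\leq\ker(\chi)$, giving $G''\leq\ker(\chi)$. Since $\pos(\chi)=2$, this is exactly the statement that $\chi$ is a Taketa-character.

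I expect the only subtle point to be making sure the inequality $\pos(\varphi)+\posind{G}{H}\leq 2$ really does force both terms to equal $1$ — this uses that $\posind{G}{-}$ takes values in $\{1,\dots,|\cd(G)|\}$ and that positions are always at least $1$ — and then correctly invoking Lemma \ref{lm2} with the normal subgroup $G''$ rather than with $H'$ (which need not be normal in $G$). Everything else is a direct application of results already established: Proposition \ref{properties}(3) to pass from $\posind{G}{H}=1$ to $G'\leq H$, Lemma \ref{lm2} to push $G''$ into the kernel of the induced character, and Frobenius reciprocity to see $\chi$ is a constituent of $\varphi^G$. No nontrivial obstacle is anticipated; the proposition is really a bookkeeping consequence of the fact that at position $2$ a position-reducing tuple has no room for anything but a linear character over a subgroup containing $G'$.
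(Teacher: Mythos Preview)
Your argument is correct. The paper proves the cycle in the opposite direction, $(1)\Rightarrow(2)\Rightarrow(3)\Rightarrow(1)$, placing the substantive work in $(2)\Rightarrow(3)$ rather than your $(1)\Rightarrow(3)$; but the opening move is the same in both: the inequality $\pos(\varphi)+\posind{G}{H}\leq 2$ forces $\pos(\varphi)=\posind{G}{H}=1$, whence $\varphi$ is linear and $G'\leq H$ by Proposition~\ref{properties}(3). From there the paper observes that $G'\leq H$ makes $H$ normal in $G$, so by Clifford theory every constituent of $\chi_H$ is linear, and hence so is every constituent of $\chi_{G'}$, giving $G''\leq\ker(\chi)$. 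Your route instead notes $G''\leq H'\leq\ker(\varphi)$, invokes Lemma~\ref{lm2} with the normal subgroup $G''$ to get $G''\leq\ker(\varphi^G)$, and then uses Frobenius reciprocity. Your version avoids the (implicit) appeal to Clifford's theorem and is arguably more self-contained, since it reuses exactly the Lemma~\ref{lm2}/Frobenius mechanism that drives Theorem~\ref{iprtaketa}; the paper's version has the mild advantage of yielding the slightly stronger intermediate conclusion that \emph{all} constituents of $\chi_{G'}$ are linear.
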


\begin{proof}$ $

$(1)\implies (2)$: Assume that $\chi$ is a PR-character and let $(G,H,\chi,\varphi)$ be a PRT. We then have $\pos(\varphi)+\posind{G}{H} \leq \pos(\chi) = 2$ so the only possibility is that $\pos(\varphi) = \posind{G}{H} = 1$ which means that $\varphi$ is linear and thus a Taketa character, so $\chi$ is a Taketa-PR-character.

$(2)\implies (3)$: Assume that $\chi$ is a Taketa-PR-character and let $(G,H,\chi,\varphi)$ be a PRT with $\varphi$ a Taketa-character. As above, we get $\pos(\varphi) = \posind{G}{H} = 1$ so by Proposition \ref{properties} (3) we must have $G'\leq H$ and hence $H\unlhd G$ so since $\pos(\varphi) = 1$ this holds for all constituents of $\chi_H$ and thus also for all constituents of $\chi_{G'}$. Hence restricting $\chi$ to $G'$ gives only linear constituents, and since the linear characters of $G'$ have $G''$ in their kernel, this shows that $G''\leq\rm{ker}(\chi)$, so $\chi$ is a Taketa-character.

$(3)\implies (1)$: Assume that $\chi$ is a Taketa-character. Since $\pos(\chi) = 2$ this means that $G''\leq\rm{ker}(\chi)$ so by Proposition \ref{taketa2} we have that $(G,G',\chi)$ is a PRT, and hence $\chi$ is a PR-character as claimed.
\end{proof}

\section{Position reducible groups}

In this section we will turn our attention to properties of the group $G$ related to the previously defined properties of the characters of $G$.

Since we have defined a certain type of character for $G$, it is natural to look at groups where all characters are of this type.

\begin{defi}[Position reducible group (PR-group)]$G$ is said to be a position
reducible group (PR-group) if all $\chi\in\Irr(G|G')$ are
PR-characters.\end{defi}

Note that we only require the non-linear irreducible characters to be PR-characters. This is because a linear character can never be a PR-character.

This also means that any abelian group will automatically be a PR-group, as it has no non-linear irreducible characters.

We now have three definitions with names containing PR (PRT, PR-character and PR-group). The PR part means essentially the same thing in all of these, and the concepts are very much related. To summarize, a PR-group is one where all non-linear irreducible characters are PR-characters, and PR-characters are those irreducible characters can can be part of a PRT. The two different ``versions'' of PRT that we have (one with four entries and one with 3 entries) are the same thing, where in one of them, we suppress part of the information.

\begin{cor}If $G$ is an M-group then $G$ is a PR-group.\end{cor}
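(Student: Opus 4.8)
The plan is to reduce the statement to Proposition~\ref{monomialcharacter}. Recall that an M-group is by definition a group in which every irreducible character is monomial, that is, induced from a linear character of some subgroup. So let $G$ be an M-group and let $\chi\in\Irr(G|G')$ be an arbitrary non-linear irreducible character. Since $G$ is an M-group, $\chi$ is monomial, and since $\chi$ is non-linear it lies in $\Irr(G|G')$. Thus $\chi$ is a monomial character in $\Irr(G|G')$, and Proposition~\ref{monomialcharacter} applies verbatim to give that $\chi$ is a PR-character.

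The only remaining point is to observe that this holds for \emph{every} $\chi\in\Irr(G|G')$, which is immediate since the argument did not use anything about $\chi$ beyond being a non-linear irreducible character of the M-group $G$. Hence all non-linear irreducible characters of $G$ are PR-characters, which is precisely the definition of a PR-group. (One should also note, for completeness, that the case where $G$ is abelian is covered trivially, as $\Irr(G|G')$ is then empty and $G$ is vacuously a PR-group; but this is already subsumed in the general argument.)

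I do not expect any real obstacle here: the corollary is essentially a one-line consequence of Proposition~\ref{monomialcharacter} together with the definitions of M-group and PR-group. The only thing to be slightly careful about is bookkeeping — making sure the quantifier ``all $\chi\in\Irr(G|G')$'' in the definition of PR-group is matched by the ``arbitrary $\chi$'' in the argument, and that Proposition~\ref{monomialcharacter} is invoked with its hypothesis $\chi\in\Irr(G|G')$ genuinely satisfied (which it is, since $\chi$ non-linear forces $\chi\notin\mathrm{Lin}(G)$). No new ideas or computations are needed.
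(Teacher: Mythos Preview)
Your proposal is correct and takes exactly the same approach as the paper: the paper's proof consists of the single sentence ``This is clear from Proposition~\ref{monomialcharacter}'', and your argument is simply the unpacking of that sentence.
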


\begin{proof}This is clear from Proposition \ref{monomialcharacter}.\end{proof}

One problem with PR-groups is that subgroups of PR-groups need not be PR-groups themselves. The same is true for M-groups, but where one can often say nice things about M-groups because the characters one considers for the subgroups are linear, this is not the case for PR-groups.

One could remedy this for PR-groups by requiring the character $\varphi$ in a PRT $(G,H,\chi,\varphi)$ to be linear. But this would be unnecessarily restrictive, as we can still say interesting things without this. Instead, we will consider the following groups.

\begin{defi}[Inductively position reducible group (IPR-group)]\label{ipr}$G$ is said to be
an inductively position reducible group (IPR-group) if for each
$\chi\in\Irr(G|G')$ there exists an $H < G$ such that $H$ is an
IPR-group and $(G,H,\chi)$ is a PRT.\end{defi}

Note that since we require $H < G$, the recursive nature of the
definition is not a problem (and just as for PR-groups, we can see that any abelian group is vacuously an IPR-group).

We see that if $P$ is some property such that any group satisfying $P$ is a PR-group and such that $P$ is inherited by subgroups, then any group satisfying $P$ is an IPR-group. In particular, by Proposition \ref{monomialcharacter} we see that if $P$ is a property such that any group satisfying $P$ is an M-group and such that $P$ is inherited by subgroups, then any group satisfying $P$ is an IPR-group. A special case of this is the following:

\begin{prop}\label{supersolvable}If $G$ is supersolvable, then $G$ is an IPR-group. In particular, if $G$ is nilpotent, then $G$ is an IPR-group.\end{prop}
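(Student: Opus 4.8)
The plan is to reduce the statement to the already-established fact that supersolvable groups are M-groups, combined with the closure of supersolvability under subgroups. Supersolvable groups are well known to be monomial (this is a classical theorem; e.g.\ every supersolvable group is an M-group), so by Proposition \ref{monomialcharacter} every $\chi \in \Irr(G|G')$ is a PR-character. The subtlety is that being a PR-group is not enough for the IPR property: we need the subgroup $H$ witnessing the PRT to itself be an IPR-group. This is where the hereditary nature of supersolvability does the work.

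First I would recall the general principle already noted in the excerpt just before the statement: if $P$ is a group-theoretic property inherited by subgroups such that every group with property $P$ is an M-group, then every group with property $P$ is an IPR-group. The argument is a straightforward induction on $|G|$: given $G$ with property $P$ and $\chi \in \Irr(G|G')$, monomiality gives $H \le G$ and $\varphi \in \rm{Lin}(H)$ with $\varphi^G = \chi$; since $\chi$ is non-linear we have $H < G$; the proof of Proposition \ref{monomialcharacter} shows $(G,H,\chi,\varphi)$ is a PRT; and $H$ inherits property $P$, so by the induction hypothesis $H$ is an IPR-group. Since the abelian case is the (vacuous) base, this closes the induction.

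Then I would simply apply this principle with $P = \text{``supersolvable''}$: supersolvability is inherited by subgroups, and every supersolvable group is an M-group, so every supersolvable group is an IPR-group. The ``in particular'' clause follows because nilpotent groups are supersolvable (a finite nilpotent group has a central series with cyclic factors obtained by refining the upper central series).

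The main obstacle, such as it is, is purely one of bookkeeping rather than mathematics: one must make sure the recursion in Definition \ref{ipr} actually terminates, which it does because the witnessing subgroup $H$ is always a proper subgroup (guaranteed by $\chi$ being non-linear, hence not induced from a linear character of $G$ itself), so the induction on group order is legitimate. No genuinely hard step is involved; the result is essentially a packaging of the classical monomiality of supersolvable groups together with the subgroup-closure remark preceding the statement.
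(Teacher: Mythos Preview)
Your proposal is correct and follows exactly the paper's own route: invoke the general principle stated just before the proposition (that any subgroup-closed property implying ``M-group'' yields IPR-groups), and then apply it with $P=$ ``supersolvable,'' using that supersolvable groups are M-groups and that supersolvability passes to subgroups. The paper's proof is just a terser version of what you wrote.
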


\begin{proof}That a supersolvable group is an M-group follows from \cite[Theorem 6.22]{isaacs}, so the claim follows from the comments preceding the proposition, since any subgroup of a supersolvable group is itself supersolvable. \end{proof}

Emulating the definition of a PR-group, now using Taketa-PR-characters instead, we get the following.

\begin{defi}[Weak IPR-group]$G$ is said to be a weak IPR-group if all $\chi\in\Irr(G|G')$ are Taketa-PR-characters.\end{defi}

The justification for the term weak IPR-groups is given in Corollary \ref{weakipr}.

By the proof of Proposition \ref{monomialcharacter}, we see that any M-group is also a weak IPR-group (since all linear characters are Taketa-characters).

The following result is one of the main motivations for studying IPR-groups.

\begin{thm}\label{iprtaketa}If $G$ is an IPR-group then $G$ is a Taketa-group.\end{thm}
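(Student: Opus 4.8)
<br>

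The plan is to prove by induction on $|G|$ that an IPR-group is a Taketa-group, i.e. that every $\chi \in \Irr(G)$ satisfies $G^{(\pos(\chi))} \leq \ker(\chi)$. The linear characters are handled trivially (they are Taketa-characters and have position $1$, with $G' = G^{(1)} \leq \ker(\chi)$), so the content is in $\Irr(G|G')$. Given a nonlinear $\chi$, the IPR hypothesis provides $H < G$ with $H$ an IPR-group and a PRT $(G,H,\chi,\varphi)$ for some constituent $\varphi \in \Irr(H)$ of $\chi_H$; thus $\pos(\varphi) + \posind{G}{H} \leq \pos(\chi)$. Since $|H| < |G|$, the induction hypothesis says $H$ is a Taketa-group, so $H^{(\pos(\varphi))} \leq \ker(\varphi)$.

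\textbf{Key steps.} First I would like to relate $\posind{G}{H}$ to the derived length needed to get inside $H$. The natural claim is: $G^{(\posind{G}{H})} \leq H$. To see this, pick $\psi \in \Irr(H)$ achieving $\posind{G}{H} = \posmax(\psi^G)$, let $\eta \in \Irr(G)$ be a constituent of $\psi^G$ of largest degree, so $\pos(\eta) = \posmax(\psi^G) = \posind{G}{H}$; by Frobenius reciprocity $\psi$ is a constituent of $\eta_H$, so $\ker(\eta) \cap H \leq \ker(\psi) \leq H$ — wait, more directly, every constituent of $\psi^G$ has $H$ containing... hmm. Actually the clean route: every irreducible constituent $\eta$ of $\psi^G$ has $\pos(\eta) \leq \posind{G}{H}$, and $\bigcap_\eta \ker(\eta) = \ker(\psi^G) \leq H$ by Lemma~\ref{lm1}. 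So it suffices to show each such $\eta$ is a Taketa-character, which by induction would require $\eta$ to be handled — but $\eta$ lives in $G$, not a proper subgroup, so this needs care. The honest approach is a simultaneous downward induction: I would set up the induction so that the statement ``$G^{(k)} \leq \ker(\chi)$ for all $\chi$ with $\pos(\chi) = k$'' is proved by induction on $k$ as well, using that for the minimal such $k$ the relevant $\eta$'s from $\psi^G$ have strictly smaller position. Combining: $G^{(\posind{G}{H})} \leq \ker(\psi^G) \leq H$, using that all constituents $\eta$ of $\psi^G$ satisfy $\pos(\eta) \leq \posind{G}{H}$ hence (by the outer induction on position) $G^{(\pos(\eta))} \leq \ker(\eta)$, and $G^{(\posind{G}{H})} \leq G^{(\pos(\eta))}$.

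\textbf{Assembling the bound.} With $N := G^{(\posind{G}{H})} \leq H$ normal in $G$, I then pass to derived subgroups inside $H$: since $N \leq H$ and $N \unlhd G$, we get $N^{(j)} = G^{(\posind{G}{H}+j)}$ is contained in $H^{(j)}$ for all $j \geq 0$ (because $N \leq H$ forces $N^{(j)} \leq H^{(j)}$). Taking $j = \pos(\varphi)$ gives $G^{(\posind{G}{H} + \pos(\varphi))} \leq H^{(\pos(\varphi))} \leq \ker(\varphi)$, the last inclusion by the induction hypothesis applied to the IPR-group $H$ (Taketa-group). Finally, $\varphi$ is a constituent of $\chi_H$, and $H^{(\pos(\varphi))} \unlhd G$? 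It need not be normal in $G$, but $\varphi$ being a constituent of $\chi_H$ means $\ker(\chi) \cap H \leq \ker(\varphi)$ is the wrong direction; instead use Lemma~\ref{lm2}-style reasoning: I want $G^{(\pos(\varphi)+\posind{G}{H})} \leq \ker(\chi)$. Since this subgroup is characteristic in $G$ hence normal, and it lies in $\ker(\varphi)$, and $\varphi$ is a constituent of $\chi_H$ so $\varphi^G$ shares $\chi$ as a constituent, apply Lemma~\ref{lm1}/\ref{lm2}: the normal subgroup $G^{(\pos(\varphi)+\posind{G}{H})} \leq \ker(\varphi)$ implies it lies in $\ker(\varphi^G) \leq \ker(\chi)$. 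Then $\pos(\varphi)+\posind{G}{H} \leq \pos(\chi)$ gives $G^{(\pos(\chi))} \leq G^{(\pos(\varphi)+\posind{G}{H})} \leq \ker(\chi)$, as required.

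\textbf{Main obstacle.} The delicate point is justifying $G^{(\posind{G}{H})} \leq H$: this is not immediate from the definition of positional index and genuinely requires the induction on character position (the constituents of $\psi^G$ are in $G$ itself, so one cannot invoke the subgroup induction directly). Getting the induction structure right — nesting induction on $\pos(\chi)$ inside (or alongside) induction on $|G|$ so that the appeal to ``all lower-position characters of $G$ are Taketa-characters'' is legitimate — is where the real work lies; the rest is bookkeeping with Lemmas~\ref{lm1} and~\ref{lm2} and the inequality defining a PRT.
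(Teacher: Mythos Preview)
Your proposal is correct and follows essentially the same route as the paper: a double induction (the paper phrases it as lexicographic induction on $(\pos(\chi),|G|)$), using a minimizing $\psi\in\Irr(H)$ together with Lemma~\ref{lm1} and the induction on position to get $G^{(k)}\leq\ker(\psi^G)\leq H$ with $k=\posmax(\psi^G)$, then the induction on $|G|$ applied to the IPR-group $H$ to get $H^{(n)}\leq\ker(\varphi)$, and finally Lemma~\ref{lm2} plus Frobenius reciprocity to push $G^{(n+k)}$ into $\ker(\chi)$. The ``main obstacle'' you flag is exactly the reason the paper does not induct on $|G|$ alone, and your proposed fix (induction on position nested with induction on order) is equivalent to the paper's lexicographic setup.
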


\begin{proof}Let $\chi\in\Irr(G)$ with $\pos(\chi) = i$. We then
need to show that $G^{(i)}\leq\ker(\chi)$.

The proof will proceed by induction on $i$ and $|G|$ (in the
lexicographic ordering of $\N\times\N$). If $|G| = 1$ the result is
trivial and if $i = 1$, $\chi$ is linear, so the result also holds
in this case. Assume therefore that $|G| > 1$ and $i > 1$.

Let $H < G$ be given such that $H$ is an IPR-group and let $\varphi\in\Irr(H)$ such that $(G,H,\chi,\varphi)$
is a PRT. This means that there is some $\psi\in\Irr(H)$ such that
$\posmax(\psi^G) + \pos(\varphi)\leq i$, so let such a $\psi$ be
given. In particular, we then have $\posmax(\psi^G) < i$.

We now get $\ker(\psi^G)\leq H$ by Lemma \ref{lm1}, but since this kernel is the
intersection of the kernels of its irreducible constituents and
since we have $\posmax(\psi^G) < i$, we get by induction that
$G^{(k)}\leq H$ where $k = \posmax(\psi^G)$ (since it must be
contained in the kernel of each irreducible constituent of
$\psi^G$).

Let $n = \pos(\varphi)$. Since
$|H| < |G|$ and $H$ is an IPR-group, we get by induction that
$H^{(n)}\leq\ker(\varphi)$.

Since we now have that $G^{(k)}\leq H$ and
$H^{(n)}\leq\ker(\varphi)$ we get that $G^{(n+k)}\leq\ker(\varphi)$.
Since $G^{(n+k)}$ is normal in $G$, we thus get that
$G^{(n+k)}\leq\ker(\varphi^G)$ by Lemma \ref{lm2}. Since $\varphi$ is a constituent of
$\chi_H$, we also have that $\chi$ is a constituent of $\varphi^G$ by Frobenius reciprocity,
so $\ker(\varphi^G)\leq\ker(\chi)$. Thus, $G^{(n+k)}\leq\ker(\chi)$,
and since we have $n + k = \pos(\varphi) + \posmax(\psi^G) \leq i$ this completes the proof.
\end{proof}

The following corollary justifies the use of the term weak IPR-group.

\begin{cor}\label{weakipr}If $G$ is an IPR-group then $G$ is a weak IPR-group.\end{cor}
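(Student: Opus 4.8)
The plan is to show that being an IPR-group forces every non-linear irreducible character to be a Taketa-PR-character, which is precisely the definition of a weak IPR-group. The key observation is that Theorem \ref{iprtaketa} already does most of the work: it tells us that an IPR-group is a Taketa-group, so \emph{every} irreducible character of an IPR-group is a Taketa-character. In particular, whenever we produce a PRT $(G,H,\chi,\varphi)$ with $H$ an IPR-group, the character $\varphi\in\Irr(H)$ is automatically a Taketa-character of $H$ (again by Theorem \ref{iprtaketa} applied to the smaller group $H$). That is exactly the extra condition needed to upgrade the PRT to a witness that $\chi$ is a Taketa-PR-character.

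So the proof proceeds as follows. Let $G$ be an IPR-group and let $\chi\in\Irr(G|G')$ be arbitrary. By the definition of an IPR-group (Definition \ref{ipr}) there exists $H<G$ such that $H$ is itself an IPR-group and $(G,H,\chi)$ is a PRT; unwinding the definition of PRT, there is a constituent $\varphi\in\Irr(H)$ of $\chi_H$ with $\pos(\varphi)+\posind{G}{H}\leq\pos(\chi)$, i.e. $(G,H,\chi,\varphi)$ is a PRT in the four-entry sense. Now apply Theorem \ref{iprtaketa} to the IPR-group $H$: it is a Taketa-group, so in particular $\varphi$ is a Taketa-character. Hence $(G,H,\chi,\varphi)$ is a PRT whose distinguished constituent is a Taketa-character, which by definition means $\chi$ is a Taketa-PR-character. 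Since $\chi\in\Irr(G|G')$ was arbitrary, all non-linear irreducible characters of $G$ are Taketa-PR-characters, so $G$ is a weak IPR-group.

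There is essentially no obstacle here: the corollary is a two-line deduction from Theorem \ref{iprtaketa}. The only point requiring any care is making sure one is entitled to apply Theorem \ref{iprtaketa} to $H$ rather than to $G$ — but this is fine precisely because the theorem is an unconditional statement about IPR-groups and $H$ was assumed to be an IPR-group in Definition \ref{ipr}. (One could alternatively phrase the whole thing without invoking the theorem for $H$ by carrying along the Taketa-character information inside a joint induction, but that merely re-proves Theorem \ref{iprtaketa}, so citing it is cleaner.)
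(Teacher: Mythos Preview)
Your proof is correct and is precisely what the paper intends by ``This is clear from Theorem \ref{iprtaketa}'': you have simply unpacked the one-line citation by noting that the subgroup $H$ in the defining PRT is itself an IPR-group, so Theorem \ref{iprtaketa} applied to $H$ makes $\varphi$ a Taketa-character. There is no difference in approach, only in level of detail.
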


\begin{proof}This is clear from Theorem \ref{iprtaketa}.\end{proof}

\begin{thm}If $G$ is a weak IPR-group then $G$ is a Taketa-group.\end{thm}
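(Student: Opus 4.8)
The plan is to prove this by induction on $|G|$, mirroring the structure of the proof of Theorem \ref{iprtaketa} but exploiting the fact that the defining property of a weak IPR-group gives us Taketa-PR-characters, whose distinguished constituents are themselves Taketa-characters — which is exactly the information we need at the inductive step, without being able to assume that the subgroup $H$ is again a weak IPR-group. So the key point is that ``Taketa-character'' is the right hypothesis to carry through the induction, since it is a statement about the single character $\varphi$ rather than about all of $H$.

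Here is how I would lay out the steps. First, let $\chi \in \Irr(G)$ with $\pos(\chi) = i$; we must show $G^{(i)} \leq \ker(\chi)$. As before, dispose of the base cases: $|G| = 1$ is trivial, and $i = 1$ means $\chi$ is linear so $G' = G^{(1)} \leq \ker(\chi)$. Now assume $|G| > 1$ and $i > 1$, so $\chi \in \Irr(G|G')$. Since $G$ is a weak IPR-group, there is a PRT $(G,H,\chi,\varphi)$ with $\varphi \in \Irr(H)$ a Taketa-character of $H$, and there is some $\psi \in \Irr(H)$ with $\posmax(\psi^G) + \pos(\varphi) \leq i$; fix such a $\psi$. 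Write $k = \posmax(\psi^G)$ and $n = \pos(\varphi)$, so $n + k \leq i$ and in particular $k < i$. Since $\ker(\psi^G) \leq H$ by Lemma \ref{lm1} and this kernel is the intersection of the kernels of the irreducible constituents of $\psi^G$, each of which has position at most $k < i$, induction on $|G|$ applied to each such constituent (note $\ker(\psi^G) \leq H < G$, and each constituent lies in $\Irr(G)$) gives $G^{(k)} \leq \ker(\psi^G) \leq H$.

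Next, since $\varphi$ is a Taketa-character of $H$ with $\pos(\varphi) = n$, by definition $H^{(n)} \leq \ker(\varphi)$ — this is the place where we use the Taketa-character hypothesis directly, and crucially we do not need $H$ to be a weak IPR-group. Combining $G^{(k)} \leq H$ with $H^{(n)} \leq \ker(\varphi)$ yields $G^{(n+k)} = (G^{(k)})^{(n)} \leq H^{(n)} \leq \ker(\varphi)$. Since $G^{(n+k)}$ is normal in $G$, Lemma \ref{lm2} gives $G^{(n+k)} \leq \ker(\varphi^G)$, and since $\varphi$ is a constituent of $\chi_H$, Frobenius reciprocity makes $\chi$ a constituent of $\varphi^G$, so $\ker(\varphi^G) \leq \ker(\chi)$. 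Hence $G^{(n+k)} \leq \ker(\chi)$, and as $n + k \leq i$ this gives $G^{(i)} \leq G^{(n+k)} \leq \ker(\chi)$, completing the induction. Finally, since the intersection of the kernels of all irreducible characters of $G$ is trivial, $G$ is solvable and $\dl(G) \leq |\cd(G)|$, i.e. $G$ is a Taketa-group.

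The only real subtlety — and the place to be careful rather than an actual obstacle — is the bookkeeping of the induction: we induct on $|G|$ alone (not on $i$ as well), applying the inductive hypothesis to the proper subgroup $H$ with its own character $\varphi$ (using that $\varphi$ is a Taketa-character, so the conclusion $H^{(n)} \leq \ker(\varphi)$ is built into the hypothesis on $\varphi$ and needs no further induction), and applying it to $G$ itself is unnecessary — the constituents of $\psi^G$ live in $\ker(\psi^G) \le H$, a proper subgroup, so that step is also covered by induction on $|G|$. Wait: those constituents are characters of $G$, not of $H$, so one does genuinely want an induction on position there as in Theorem \ref{iprtaketa}; the cleanest fix is to induct on the pair $(|G|, i)$ in lexicographic order exactly as in that theorem's proof, and I would phrase it that way. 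With that ordering in place every inductive application is to a strictly smaller instance, and no genuine difficulty remains.
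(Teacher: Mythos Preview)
Your argument (after the self-correction at the end) is correct and follows essentially the same route as the paper: pick a PRT $(G,H,\chi,\varphi)$ with $\varphi$ a Taketa-character, use Lemma~\ref{lm1} and induction on the constituents of $\psi^G$ to get $G^{(k)}\leq H$, use the Taketa-character hypothesis on $\varphi$ to get $H^{(n)}\leq\ker(\varphi)$, and finish via Lemma~\ref{lm2} and Frobenius reciprocity.

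One simplification worth noting: the paper inducts on $i$ alone, not on $(|G|,i)$. Your induction on $|G|$ is never actually invoked --- the only place you thought you might need it was for $H^{(n)}\leq\ker(\varphi)$, but that is given to you \emph{directly} by the hypothesis that $\varphi$ is a Taketa-character, with no recursion into $H$ required. The constituents of $\psi^G$ are irreducible characters of $G$ of position strictly less than $i$, so induction on $i$ (for the fixed group $G$) handles them. This is precisely the point of the ``weak'' definition: by demanding that $\varphi$ already be a Taketa-character, one trades the structural hypothesis on $H$ (needed in Theorem~\ref{iprtaketa}) for a hypothesis on the single character $\varphi$, and the induction collapses to one variable. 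Your closing sentence about solvability and the Taketa inequality is a consequence of being a Taketa-group, not its definition; the theorem is proved once every $\chi$ is shown to be a Taketa-character.
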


\begin{proof}Let $\chi\in\Irr(G)$ with $\pos(\chi) = i$. We need to show that $G^{(i)}\leq \rm{ker}(\chi)$. We will proceed by induction on $i$.

If $i= 1$ then the statement is trivial, so assume $i > 1$.

Let $H < G$ and $\varphi\in\Irr(H)$ be given such that $\varphi$ is a Taketa-character and $(G,H,\chi,\varphi)$ is a PRT. Let $n = \pos(\varphi)$, so $H^{(n)}\leq\rm{ker}(\varphi)$ since $\varphi$ is a Taketa-character.

Let $\psi\in\Irr(H)$ such that $\pos(\varphi) + \posmax(\psi^G)\leq i$. We have $\rm{ker}(\psi^G)\leq H$ by Lemma \ref{lm1} and since $\posmax(\psi^G) < i$ we have by induction that $G^{(k)}\leq \rm{ker}(\psi^G)\leq H$ where $k = \posmax(\psi^G)$.

We now have that $G^{(n+k)}\leq\rm{ker}(\varphi)$ and since $G^{(n+k)}$ is normal in $G$, this means that $G^{(n+k)}\leq\rm{ker}(\varphi^G)$ by Lemma \ref{lm2}. Since $\varphi$ is a constituent of $\chi_H$ we have that $\chi$ is a constituent of $\varphi^G$ by Frobenius reciprocity, so we get that $G^{(n+k)}\leq\rm{ker}(\chi)$, and since $n+k = \pos(\varphi) + \posmax(\psi^G) \leq i$ this completes the proof.
\end{proof}

An immediate consequence of Lemma \ref{xgr1} is the following.

\begin{prop}If $\rm{dl}(G)\leq 2$ then $G$ is an IPR-group.\end{prop}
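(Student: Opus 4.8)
The plan is to exhibit the derived subgroup $G'$ as the witnessing subgroup $H$ for every nonlinear irreducible character, using the hypothesis $\mathrm{dl}(G)\leq 2$, which says precisely that $G'$ is abelian (equivalently $G''=1$). So let $\chi\in\Irr(G|G')$ be arbitrary. First I would observe that $G'<G$: since $\chi$ is nonlinear, $G$ is nonabelian, so $G'\neq\{1\}$; and $G$ is solvable (indeed $\mathrm{dl}(G)\leq 2$), so $G'\neq G$. Next, any abelian group is vacuously an IPR-group, so $H=G'$ is an IPR-group. It therefore only remains to check that $(G,G',\chi)$ is a PRT.

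For the PRT condition I would invoke Lemma \ref{xgr1}: for a constituent $\varphi\in\Irr(G')$ of $\chi_{G'}$, the tuple $(G,G',\chi,\varphi)$ is a PRT if and only if $\pos(\varphi)<\pos(\chi)$. Since $G'$ is abelian, every $\varphi\in\Irr(G')$ is linear, so $\pos(\varphi)=1$; and since $\chi\in\Irr(G|G')$ is nonlinear, $\pos(\chi)\geq 2>1$. Hence $(G,G',\chi,\varphi)$ is a PRT (alternatively one can cite Proposition \ref{taketa2}, since $G''\leq\ker(\chi)$ trivially). As $\chi$ was an arbitrary element of $\Irr(G|G')$ and $G'$ is an IPR-group with $G'<G$, this shows $G$ is an IPR-group.

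There is no real obstacle here; the only points requiring a word of care are the strict inequality $G'<G$ (which uses solvability to rule out a nontrivial perfect group) and the bookkeeping that $G'$, being abelian, falls under the vacuous base case of the recursive definition of IPR-group. Everything else is an immediate application of Lemma \ref{xgr1}.
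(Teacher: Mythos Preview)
Your proposal is correct and follows exactly the paper's approach: the paper's proof is simply ``Since $G'$ is abelian, it is an IPR-group, and for any $\chi\in\Irr(G|G')$ it is clear from Lemma~\ref{xgr1} that $(G,G',\chi)$ is a PRT.'' You have merely supplied the details the paper leaves implicit, including the check that $G'<G$ and the observation that $\pos(\varphi)=1$ for constituents of $\chi_{G'}$.
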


\begin{proof}Since $G'$ is abelian, it is an IPR-group, and for any $\chi\in\Irr(G|G')$ it is clear from Lemma \ref{xgr1} that $(G,G',\chi)$ is a PRT. \end{proof}

If $G$ is a PR-group then $G$ need not be solvable, but we do have the following result, showing that it is at least not perfect (so in particular not simple).

\begin{cor}If $G$ is a non-abelian PR-group then $G'$ is not perfect. In particular, $G$ is not perfect.\end{cor}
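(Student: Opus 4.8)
The plan is to show that if $G$ is a non-abelian PR-group, then $G'$ cannot be perfect, by exploiting the fact that a PR-character always lets us find a proper subgroup $H$ with $\posind{G}{H}$ strictly less than the position of the character. The key observation is that if $G' = G''$, then $G'$ is perfect and in particular $G'$ has no non-trivial linear characters, which should clash with the inductive "position-dropping" mechanism.

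First I would argue by contradiction: suppose $G$ is a non-abelian PR-group with $G' = G''$. Since $G$ is non-abelian, $\Irr(G\mid G')$ is non-empty; pick $\chi\in\Irr(G\mid G')$ of smallest possible position, say $\pos(\chi) = i \geq 2$. Because $G$ is a PR-group, there is some $H < G$ with $(G,H,\chi)$ a PRT, so there is a constituent $\varphi\in\Irr(H)$ of $\chi_H$ and a $\psi\in\Irr(H)$ with $\pos(\varphi) + \posmax(\psi^G) \leq i$. In particular $\posmax(\psi^G) < i$ and $\pos(\varphi) < i$. Now $\ker(\psi^G) \leq H$ by Lemma \ref{lm1}, and every irreducible constituent of $\psi^G$ has position at most $\posmax(\psi^G) < i$. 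Here is where I would use the minimality of $i$: the constituents of $\psi^G$ of position $1$ are linear, hence kill $G'$; any constituent of position strictly between $1$ and $i$ lies in $\Irr(G\mid G')$ and so would contradict the minimality of $i$ unless there are none — wait, that is too strong, so instead I would more carefully track that every constituent $\theta$ of $\psi^G$ with $\pos(\theta) \geq 2$ has $\pos(\theta) < i$, contradicting minimality; therefore all constituents of $\psi^G$ are linear, which forces $G' \leq \ker(\psi^G) \leq H$. Thus $G' \leq H$, hence $H \unlhd G$, and moreover $\posind{G}{H} = 1$ by Proposition \ref{properties}(3), so $\posmax(\psi^G) \geq 1$ is consistent but now $\pos(\varphi) \leq i - 1$.

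With $G' \leq H$, the constituent $\varphi$ of $\chi_H$ restricts the analysis back down: since $G' = G''$ is perfect, $G'$ has only the trivial linear character, so $\varphi_{G'}$ has a non-linear constituent, meaning $\pos(\varphi_{G'}) \geq 2$ computed inside $G'$. The point is to iterate: $\varphi$ is a non-linear (since $\pos(\varphi)$ could be $1$? no — if $\pos(\varphi) = 1$ then $\varphi$ is linear, but $\varphi$ is a constituent of $\chi_H$ with $G' \leq H$ and $\chi\in\Irr(G\mid G')$, so $\varphi$ restricted to $G'$ must be non-trivial, contradiction). So $\pos(\varphi) \geq 2$, and $\varphi\in\Irr(H\mid H')$. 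One then wants to push this down through a descending chain, but since the group may not be solvable this must terminate by a size argument rather than a derived-length argument: I would set up a minimal counterexample with respect to $|G|$, so that $H$ being a PR-group — which is NOT automatic — is the gap. Indeed PR-groups are not subgroup-closed, so I cannot assume $H$ is a PR-group.

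The hard part, and the main obstacle, is exactly this: subgroups of PR-groups need not be PR-groups, so a naive descent through $G > H > \cdots$ has no inductive traction. The fix I would pursue is to only descend along the derived subgroup: take $\chi\in\Irr(G\mid G')$ and note that if $G' = G''$ then $G''\leq \ker(\chi)$ is false in general, but consider instead the restriction behaviour forced by $G' = G''$. A cleaner route: if $G'$ is perfect, then for the PRT $(G,H,\chi,\varphi)$ with $\pos(\varphi) + \posind{G}{H} \leq \pos(\chi)$, use that $\posind{G}{H} \geq 1$ always and $\posind{G}{H} = 1$ iff $G'\leq H$ (Proposition \ref{properties}(3)); combined with $\ker(\psi^G)\leq H$ and the regular-character-type reasoning, derive that some $\varphi$ of position $\geq 2$ sits inside $H$ with $\posind{H}{H\cap \text{(something)}}$ dropping — and the contradiction is obtained by comparing the minimal position of a non-linear character of $G$ with the claimed strict decrease, since $G'$ being perfect means the position cannot drop to $1$ anywhere in the chain $G \geq G' \geq G'' = G'$, forcing a non-linear constituent at every stage and hence no valid PRT for the minimal-position $\chi$. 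I expect the delicate bookkeeping is in justifying that "$G'$ perfect" propagates to the relevant subgroup so that the strict inequality $\pos(\varphi) < \pos(\chi)$ cannot be sustained, and I would lean on Lemma \ref{xgr1} and Proposition \ref{xgr2} applied to $G'$ itself to close the loop.
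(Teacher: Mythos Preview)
Your proposal contains a correct proof, but you do not notice that it is already complete halfway through, and you then wander off into unnecessary territory.

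The key observation you miss is that the minimal position $i$ of a non-linear irreducible character is \emph{always exactly $2$}. Indeed, if $G$ is non-abelian then $|\cd(G)|\geq 2$, and any $\chi$ with $\chi(1)=f_2$ lies in $\Irr(G\mid G')$ and has $\pos(\chi)=2$. So your ``smallest possible position'' $i$ is just $2$.

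With $i=2$, your own chain of deductions already closes the argument. From the PRT $(G,H,\chi,\varphi)$ you get $\pos(\varphi)+\posind{G}{H}\leq 2$, forcing $\pos(\varphi)=\posind{G}{H}=1$. Then $G'\leq H$ by Proposition~\ref{properties}(3), and $\varphi$ is linear. You then correctly argue that $\varphi_{G'}$ must be a non-trivial constituent of $\chi_{G'}$ (else $G'\leq\ker(\chi)$ via Clifford), while $G'$ perfect would force every linear character of $G'$ to be trivial. That is the contradiction; you are done. There is no need to establish $\pos(\varphi)\geq 2$ and then attempt a descent.

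This is exactly the content of the paper's Proposition~\ref{pos2} (the implication $(1)\Rightarrow(3)$ for position-$2$ characters), and the paper's proof of the corollary is then two lines: pick $\chi$ of position $2$; since $\chi$ is a PR-character, Proposition~\ref{pos2} gives $G''\leq\ker(\chi)$, while $G'\not\leq\ker(\chi)$, so $G''\neq G'$.

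Everything you write after ``So $\pos(\varphi)\geq 2$'' is unnecessary, and the ``main obstacle'' you identify (that subgroups of PR-groups need not be PR-groups) never actually arises. No iteration, no minimal counterexample in $|G|$, and no appeal to Lemma~\ref{xgr1} or Proposition~\ref{xgr2} is needed.
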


\begin{proof}Let $\chi\in\Irr(G)$ with $\pos(\chi) = 2$. By Proposition \ref{pos2} we know that $G''\leq\rm{ker}(\chi)$. But since $\pos(\chi) \neq 1$ we have that $G'\not\leq\rm{ker}(\chi)$ so we cannot have $G'' = G'$ which proves the claim.\end{proof}

We can also use the above to get a bound on the derived length of a solvable PR-group in terms of the number of character degrees. The bound we obtain is slightly better than what is known for arbitrary solvable groups as long as the number of character degrees is not too large (see the introduction). First, we define a specific normal series for $G$ and recollect some of the properties of this.

\begin{defi}\label{normalseries}Let $i\geq 0$ be an integer. Define $$\rm{D}_i(G) = \bigcap_{\chi\in\Irr(G),\, \pos(\chi)\leq i}\rm{ker}(\chi)$$
\end{defi}

If $n = |\cd(G)|$ then it is now clear that we have a normal series $$\{1\} = \rm{D}_n(G) \leq \rm{D}_{n-1}(G)\leq\cdots\leq \rm{D}_1(G) \leq \rm{D}_0(G) = G$$ where we use the convention that an empty intersection of subgroups of $G$ is $G$ itself. It is also clear that we have $\rm{D}_1(G) = G'$ and that all the $\rm{D}_i(G)$ are normal in $G$. Some further properties of the $\rm{D}_i(G)$ are listed in the following theorem:

\begin{thm}\label{di}Assume $G$ is solvable and let $n = |\cd(G)|$.
\begin{enumerate}[(1)]
\item For all $1\leq i\leq n$ we have $\rm{dl}(\rm{D}_{i-1}(G)/\rm{D}_i(G))\leq 3$
\item If $\rm{dl}(\rm{D}_{i-1}(G)/\rm{D}_i(G)) = 3$ for some $1\leq i\leq n$ then $\rm{dl}(\rm{D}_{i-1}(G)/\rm{D}_{i+1}(G)) \leq 4$
\item If $i\leq 3$ then $\rm{dl}(\rm{D}_{n-i}(G))\leq i$
\end{enumerate}
\end{thm}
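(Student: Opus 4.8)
The plan is to reduce all three parts to structural facts bounding the derived length of a normal subgroup of a solvable group in terms of the degrees of the irreducible characters lying over it, and then to feed the series $\rm{D}_j(G)$ into those facts. The one preliminary observation needed, valid for every $j$, is the following description of a layer. Put $\bar G = G/\rm{D}_j(G)$ and $\bar N = \rm{D}_{j-1}(G)/\rm{D}_j(G)$. The irreducible characters of $\bar G$ are the inflations of the $\chi\in\Irr(G)$ with $\rm{D}_j(G)\le\ker\chi$, and such a $\chi$ has $\bar N$ in its kernel exactly when in addition $\rm{D}_{j-1}(G)\le\ker\chi$. So $\Irr(\bar G\mid\bar N)$ consists of the inflations of the $\chi\in\Irr(G)$ with $\rm{D}_j(G)\le\ker\chi$ but $\rm{D}_{j-1}(G)\not\le\ker\chi$. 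Since $\rm{D}_{j-1}(G)$ is by definition the intersection of the kernels of all irreducible characters of position $\le j-1$, every such $\chi$ has $\pos(\chi)\ge j$, hence $\chi(1)\ge f_j$ (writing $\cd(G)=\{f_1<\cdots<f_n\}$); and $f_j$ is actually attained among them unless $\bar N=1$, because otherwise $\bar N$ would lie in $\ker_{\bar G}(\chi)$ for every $\chi$ of position $\le j$, hence in $\bigcap_{\pos(\chi)\le j}\ker_{\bar G}(\chi)=\rm{D}_j(G)/\rm{D}_j(G)$. Thus every layer is a normal subgroup $\bar N\unlhd\bar G$ of a solvable group with $\bar N\le\ker\theta$ for all $\theta\in\Irr(\bar G)$ of degree $<f_j$, and with $f_j$ occurring as a degree in $\Irr(\bar G\mid\bar N)$.

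For part (1) I would then invoke the local Taketa-type estimate that such a configuration forces $\dl(\bar N)\le 3$, and I expect this to be the main obstacle of the whole theorem. I would prove it by descending induction on the finitely many degrees of $\bar G$ exceeding $f_j$, peeling them off one at a time: using the vanishing-off subgroup of $\bar G$ relative to the characters of the currently smallest relevant degree, together with Clifford theory, one shows that removing that degree from consideration costs at most one abelianization, and that after the very first peel no further peel adds anything --- this is where the constant $3$ (conjecturally $2$) arises. This is exactly the circle of ideas behind the bound in \cite{kildetoft12}, as developed in \cite{manzwolf}.

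For part (2) the key point is that the combined layer $\rm{D}_{i-1}(G)/\rm{D}_{i+1}(G)$, viewed inside $G/\rm{D}_{i+1}(G)$, involves only characters of degree $\ge f_i$, while its lower sublayer $\rm{D}_i(G)/\rm{D}_{i+1}(G)$ involves only characters of degree $\ge f_{i+1}$. Rerunning the peeling argument for the combined layer, and using that the extreme case $\dl(\rm{D}_{i-1}(G)/\rm{D}_i(G))=3$ of part (1) pins down the $f_i$-part of $\rm{D}_{i-1}(G)/\rm{D}_i(G)$ so rigidly that the characters accounting for $\rm{D}_i(G)/\rm{D}_{i+1}(G)$ must then lie over linear characters of that section, one sees that the lower sublayer adds at most one further abelianization, giving $3+1=4$ rather than the naive $3+3$. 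The obstacle here is purely the bookkeeping needed to extract this rigidity from the extremal case.

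For part (3) I would not try to derive the bound from (1) and (2) --- which would only give roughly $5$ --- but instead use the sharper structural input available at the top of the degree set: for $k\le 3$, if $N\unlhd G$ with $G$ solvable and every character in $\Irr(G\mid N)$ has degree among the $k$ largest degrees of $G$, then $\dl(N)\le k$. This is the ``bottom'' of the Taketa inequality, known unconditionally for small $k$ and proved by the same vanishing-off and Clifford analysis, its $k=1$ case being that a single maximal degree over $N$ forces $N$ abelian. Granting it, one simply observes --- again by the preliminary observation, since $\rm{D}_{n-i}(G)$ is the intersection of the kernels of all characters of degree $\le f_{n-i}$ --- that $\cd(G\mid\rm{D}_{n-i}(G))\subseteq\{f_{n-i+1},\dots,f_n\}$, the $i$ largest degrees, and concludes $\dl(\rm{D}_{n-i}(G))\le i$ for $i=1,2,3$. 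So the whole theorem rests on the two structural estimates --- the local layer bound and its top-of-the-range refinement --- and everything else is the translation recorded in the first paragraph.
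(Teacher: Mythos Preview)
Your reduction is the same one the paper makes: all three parts are quoted structural results about the series $\mathrm{D}_i(G)$ in a solvable group, and the paper's proof consists of nothing but three citations --- part (1) is \cite[Theorem 16.5]{manzwolf}, part (2) is \cite[Theorem 16.8]{manzwolf}, and part (3) is \cite[Lemma 4.1]{kildetoft12}. Your preliminary paragraph correctly spells out the translation (that $\Irr(\bar G\mid \bar N)$ consists of characters of degree $\ge f_j$, with $f_j$ attained when the layer is nontrivial) that is implicit in invoking those references.

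Where you differ is that you go on to sketch how the cited theorems themselves might be proved. The paper does not do this; it treats them as black boxes. Your sketches are plausible in spirit but loose in detail --- for instance, the ``peeling off one degree at a time via the vanishing-off subgroup'' outline for part (1) is not really the mechanism behind Manz--Wolf's Theorem 16.5, which rests instead on orbit-size theorems for solvable linear groups and Fitting-subgroup arguments, and your account of the extremal rigidity in part (2) is more heuristic than the actual bookkeeping in Theorem 16.8. None of this is a gap in proving the stated theorem, since within this paper those results are simply cited; it just means your attempt to re-derive the cited facts would need substantial work to become a proof of \emph{them}.
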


\begin{proof}
\begin{enumerate}[(1)]
\item This follows from \cite[Theorem 16.5]{manzwolf}.
\item This is \cite[Theorem 16.8]{manzwolf} (the conditions there are the same as here because of (1)).
\item This is \cite[Lemma 4.1]{kildetoft12}.
\end{enumerate}
\end{proof}

We also have the following corollary to Proposition \ref{pos2}.

\begin{cor}\label{d2}Assume that $G$ is a solvable PR-group. Then $\rm{dl}(G/\rm{D}_2(G))\leq 2$.
\end{cor}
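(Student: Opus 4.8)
The plan is to show that $\rm{D}_2(G)$, the intersection of the kernels of all irreducible characters of position $1$ or $2$, contains the second derived subgroup $G''$, which immediately gives $\rm{dl}(G/\rm{D}_2(G))\leq 2$. Recall that $\rm{D}_2(G) = G' \cap \bigcap_{\pos(\chi)=2}\ker(\chi)$, but since $G' = \rm{D}_1(G) \geq \rm{D}_2(G)$ anyway, we really only need to control the characters of position exactly $2$.

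The key observation is that $G$ being a PR-group means every $\chi\in\Irr(G|G')$ is a PR-character, and in particular every $\chi$ with $\pos(\chi) = 2$ is a PR-character. By Proposition \ref{pos2}, for a character of position $2$ the notions of PR-character, Taketa-PR-character, and Taketa-character all coincide, so every $\chi\in\Irr(G)$ with $\pos(\chi)=2$ is a Taketa-character, meaning $G^{(2)} = G''\leq\ker(\chi)$. Taking the intersection over all such $\chi$, together with the trivial fact that $G''\leq G' = \rm{D}_1(G)$ (and $G''\leq\ker(\lambda)$ for every linear $\lambda$, so $G''\leq\rm{D}_1(G)$), we obtain $G''\leq\bigcap_{\pos(\chi)\leq 2}\ker(\chi) = \rm{D}_2(G)$.

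From $G''\leq\rm{D}_2(G)$ it follows that $(G/\rm{D}_2(G))'' $ is trivial, i.e. $\rm{dl}(G/\rm{D}_2(G))\leq 2$, which is the claim. I do not anticipate any real obstacle here: the work has all been done in Proposition \ref{pos2}, and the only thing to be careful about is making sure the case where there are no position-$2$ characters at all (so that the intersection defining $\rm{D}_2(G)$ over position-$2$ characters is empty) is handled by the empty-intersection convention — but then $\rm{D}_2(G) = \rm{D}_1(G) = G'$ and certainly $\rm{dl}(G/G')\leq 1\leq 2$. So the proof is essentially a one-line deduction from Proposition \ref{pos2} and the definition of the series $\rm{D}_i(G)$.

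\begin{proof}
It suffices to show that $G''\leq\rm{D}_2(G)$, since this gives $(G/\rm{D}_2(G))'' = \{1\}$ and hence $\rm{dl}(G/\rm{D}_2(G))\leq 2$.

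Let $\chi\in\Irr(G)$ with $\pos(\chi)\leq 2$. If $\pos(\chi) = 1$ then $\chi$ is linear, so $G'\leq\ker(\chi)$ and in particular $G''\leq\ker(\chi)$. If $\pos(\chi) = 2$ then, since $G$ is a PR-group, $\chi$ is a PR-character, so by Proposition \ref{pos2} $\chi$ is a Taketa-character, which (as $\pos(\chi) = 2$) means $G'' = G^{(2)}\leq\ker(\chi)$. In either case $G''\leq\ker(\chi)$, and intersecting over all such $\chi$ gives $G''\leq\rm{D}_2(G)$ as required.
\end{proof}
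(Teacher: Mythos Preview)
Your proof is correct and is exactly the argument the paper has in mind: the paper's own proof is the single line ``This follows directly from Proposition \ref{pos2},'' and you have simply unpacked that deduction by showing $G''\leq\ker(\chi)$ for every $\chi$ of position at most $2$. (As you implicitly observe, the solvability hypothesis is not actually used here.)
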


\begin{proof}This follows directly from Proposition \ref{pos2}.
\end{proof}

Note that we just need to assume that all irreducible characters of position $2$ are PR-characters for the above proof to work.

We can then combine these statements to get the following. The idea of the proof is the same as in the proof of \cite[Theorem 1]{kildetoft12}.

\begin{thm}If $G$ is a solvable PR-group with $|\cd(G)| \geq 6$ then $\rm{dl}(G)\leq 2|\cd(G)| - 4$.
\end{thm}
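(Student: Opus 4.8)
The plan is to combine the structural facts about the series $\rm{D}_i(G)$ from Theorem \ref{di} with the extra information coming from the PR-group hypothesis, namely Corollary \ref{d2}, which tells us that $\rm{dl}(G/\rm{D}_2(G))\leq 2$. Set $n = |\cd(G)|$. The crude way to bound $\rm{dl}(G)$ would be to split the normal series $G = \rm{D}_0(G) \geq \rm{D}_1(G) \geq \cdots \geq \rm{D}_n(G) = \{1\}$ into its $n$ factors, each of derived length at most $3$ by Theorem \ref{di}(1), giving $\rm{dl}(G)\leq 3n$; the improvement to $2n-3$ in \cite[Theorem 1]{kildetoft12} comes from the observation in Theorem \ref{di}(2) that a factor of derived length $3$ is always ``paid for'' by the next factor being cheap, together with the stronger estimate Theorem \ref{di}(3) near the bottom of the series. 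I would reproduce that counting argument here, and then shave off one more unit using Corollary \ref{d2}.

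Concretely: first I would handle the top of the series using the PR hypothesis. By Corollary \ref{d2} we have $\rm{dl}(G/\rm{D}_2(G))\leq 2$, which already covers the first two factors $\rm{D}_0/\rm{D}_1$ and $\rm{D}_1/\rm{D}_2$ with a total cost of $2$ rather than the cost of up to $4$ (or up to $5$, after invoking Theorem \ref{di}(2)) that the general argument would assign to them. Next I would handle the bottom of the series using Theorem \ref{di}(3): $\rm{dl}(\rm{D}_{n-3}(G))\leq 3$. That leaves the ``middle'' factors $\rm{D}_{i-1}(G)/\rm{D}_i(G)$ for $3\leq i\leq n-3$, of which there are $n-5$, and for these I would run exactly the amortized counting of \cite[Theorem 1]{kildetoft12}: pairing each factor of derived length $3$ with the following factor (derived length $\leq 1$ by Theorem \ref{di}(2), since $\rm{dl}(\rm{D}_{i-1}/\rm{D}_{i+1})\leq 4$ forces $\rm{dl}(\rm{D}_i/\rm{D}_{i+1})\leq 1$), one shows each of these $n-5$ factors contributes on average at most $2$, i.e. at most $2(n-5)$ in total, plus possibly a boundary adjustment at the junctions with the top and bottom pieces. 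Adding up: $\rm{dl}(G) \leq 2 + 2(n-5) + 3 = 2n - 5$; one then checks, carefully tracking the boundary factor $\rm{D}_2/\rm{D}_3$ where the top block meets the middle block (and whether Theorem \ref{di}(2) can be applied across that junction, which requires knowing $\rm{dl}(\rm{D}_2/\rm{D}_3)$ is not forced to be $3$ by something at level $\leq 2$), that the bound can in fact be pushed to $2n-4$, and this is where the hypothesis $n\geq 6$ is needed so that the top block ($\rm{D}_0$ through $\rm{D}_2$), the bottom block ($\rm{D}_{n-3}$ through $\rm{D}_n$) and a nonempty middle actually fit together without overlap.

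The main obstacle I anticipate is the bookkeeping at the two junctions of the series — between $\rm{D}_2(G)$ and the middle portion, and between the middle portion and $\rm{D}_{n-3}(G)$ — where one must be careful not to double-count or to lose the amortization. In particular, the subtle point is that the pairing argument of Theorem \ref{di}(2) relates $\rm{D}_{i-1}/\rm{D}_{i+1}$, so whether a ``bad'' factor at the boundary of the middle block can be amortized against a factor lying in the bottom block (or the top block) needs explicit checking; getting exactly $2n-4$ rather than $2n-3$ is precisely the claim that the one-unit saving from Corollary \ref{d2} survives this reconciliation. Everything else — the three citations to Theorem \ref{di}, the use of derived length being additive along normal series in the sense that $\rm{dl}(G)\leq \sum_i \rm{dl}(\rm{D}_{i-1}(G)/\rm{D}_i(G))$ — is routine, so I would state the amortized inequality as a short lemma or inline computation and then verify the $n\geq 6$ case boundaries by hand.
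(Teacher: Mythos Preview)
Your approach is essentially the paper's: use Corollary \ref{d2} on the top two factors, Theorem \ref{di}(3) on the bottom three, and the amortized estimate from Theorem \ref{di}(1)--(2) on the middle factors; the paper just makes the amortization explicit by constructing an auxiliary series $E_i$ (inserting $D_{i-1}''$ whenever $\dl(D_{i-1}/D_i)=3$) rather than arguing by pairing, and then sums $2 + 2(n-6) + 3 + 3 = 2n-4$.

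One point of confusion to clean up: your preliminary count $2 + 2(n-5) + 3 = 2n-5$ is \emph{stronger} than the target, so saying the bound ``can be pushed to $2n-4$'' reads backwards. What you mean is that the naive $2(n-5)$ for the middle block is too optimistic, because the pairing argument can leave one unpaired factor of derived length $3$ at the boundary; accounting for this costs one extra unit and lands you at $2n-4$. The paper handles exactly this by treating the factor $E_{n-4}/E_{n-3}$ separately with bound $3$ rather than $2$, and this is also where the hypothesis $n\geq 6$ enters (so the top block $D_0,\dots,D_2$, the middle, and the bottom block $D_{n-3},\dots,D_n$ do not overlap). Once you phrase the boundary correction this way, there is nothing subtle left and no further checking at the $D_2/D_3$ junction is needed.
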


\begin{proof}Let $n = |\cd(G)|$. Let $E_0 = \rm{D}_0(G) = G$, $E_1 =
\rm{D}_1(G) = G'$, $E_2 = \rm{D}_2(G)$ and define $E_i$ for $3\leq i\leq n-5$ by:

If $E_i$ has already been defined by a previous step, skip to $i+1$.

If $\rm{D}_{i-1}(G)''\leq \rm{D}_i(G)$, set $E_i = \rm{D}_i(G)$.

Otherwise, set $E_i = \rm{D}_{i-1}(G)''$ and $E_{i+1} = \rm{D}_{i+1}(G)$.

Set $E_{n-4} = \rm{D}_{n-4}(G)$ (this is consistent with the previous
rule), and $E_{n-3} = \rm{D}_{n-3}(G)$.

We now have a new normal series for $G$ given by $$\{1\}\leq
E_{n-3}\leq E_{n-4} \leq \cdots \leq E_1\leq E_0 = G$$

By Corollary \ref{d2} we have $\rm{dl}(E_0/E_2)\leq 2$, and by construction along with Theorem \ref{di} part 1, we get that
$\dl(E_{i-1}/E_i)\leq 2$ for $3\leq i \leq n-4$. By Theorem \ref{di} part 2, we also have $\dl(E_{n-4}/E_{n-3})\leq 3$. Finally, Theorem \ref{di} part 3 gives $\dl(E_{n-3})\leq 3$.

Combining this, we get $$\dl(G)\leq \dl(E_0/E_2) +
\sum_{i=3}^{n-4}\dl(E_{i-1}/E_i) + \dl(E_{n-4}/E_{n-3}) +
\dl(E_{n-3})$$ $$\leq 2 + (n-6)\cdot 2 + 3 + 3 = 2n-3 =
2|\cd(G)|-4$$ as was the claim.

\end{proof}

Note that the above proof also shows that if $G$ is a solvable PR-group with $5$ character degrees, then $\rm{dl}(G)\leq |\cd(G)|$. This is already known to hold for arbitrary solvable groups with $5$ character degrees however, as mentioned in the introduction.

\section{Some questions}

As previously noted, if $G$ is an M-group then $G$ is a weak IPR-group. But looking at the groups up to order $384$ in GAP, one can see that at least up to this order, all M-groups are in fact IPR-groups.

This is of course not a very large order to compute up to, but unfortunately due to the recursive nature of the definition of an IPR-group, it takes a lot of time and memory to do this for larger groups.

The reason it is hard to tell whether an M-group is necessarily an IPR-group is that not much is known about what conditions guarantee a subgroup of an M-group to be an M-group itself.

One thing to note is that the example given by Dade of an M-group which contains a normal subgroup which is not itself an M-group (\cite{dade73}), is in fact an IPR-group, but so is the normal subgroup which is not an M-group. This can be seen as follows: Let $G$ be Dade's example. Then $\cd(G) = \{1,2,7,14,16\}$, $\cd(G') = \{1,7,8\}$ and $\cd(G'') = \{1,8\}$. But $|G''| = 128$ so this subgroup is nilpotent, and the claim now follows from applying Lemma \ref{xgr2} twice. The same type of argument also gives the claim for the normal subgroup which is not an M-group. \\

Also worth noting is that all Taketa-groups of order at most $1000$, except possible those of order $768$, are PR-groups (by GAP computations), and as noted in Proposition \ref{pos2} there is at least some connection between the properties.

If it is indeed the case that any Taketa-group is a PR-group, then if $P$ is a property such that any group satisfying $P$ is a Taketa-group and such that $P$ is inherited by subgroups, then any group satisfying $P$ will be an IPR-group. An interesting special case of this would be that any group of odd order is an IPR-group, since these are Taketa-groups by the proof of \cite[Corollary 16.7]{manzwolf}.

\bibliography{bibtex-full}

\def\cprime{$'$}
\begin{thebibliography}{MW93}

\bibitem[Dad73]{dade73}
Everett~C. Dade.
\newblock Normal subgroups of {$M$}-groups need not be {$M$}-groups.
\newblock {\em Math. Z.}, 133:313--317, 1973.

\bibitem[IK98]{isaacsknutson98}
I.~M. Isaacs and Greg Knutson.
\newblock Irreducible character degrees and normal subgroups.
\newblock {\em J. Algebra}, 199(1):302--326, 1998.

\bibitem[Isa76]{isaacs}
I.~Martin Isaacs.
\newblock {\em Character theory of finite groups}.
\newblock Academic Press [Harcourt Brace Jovanovich Publishers], New York,
  1976.
\newblock Pure and Applied Mathematics, No. 69.

\bibitem[Kel03]{keller03}
Thomas~Michael Keller.
\newblock Orbits in finite group actions.
\newblock In {\em Groups {S}t. {A}ndrews 2001 in {O}xford. {V}ol. {II}}, volume
  305 of {\em London Math. Soc. Lecture Note Ser.}, pages 306--331. Cambridge
  Univ. Press, Cambridge, 2003.

\bibitem[Kil12]{kildetoft12}
Tobias Kildetoft.
\newblock Bounding the derived length of a solvable group: an improvement on a
  result by {G}luck.
\newblock {\em Comm. Algebra}, 40(5):1856--1859, 2012.

\bibitem[Lew01]{lewis01}
Mark~L. Lewis.
\newblock Derived lengths of solvable groups having five irreducible character
  degrees. {I}.
\newblock {\em Algebr. Represent. Theory}, 4(5):469--489, 2001.

\bibitem[MW93]{manzwolf}
Olaf Manz and Thomas~R. Wolf.
\newblock {\em Representations of solvable groups}, volume 185 of {\em London
  Mathematical Society Lecture Note Series}.
\newblock Cambridge University Press, Cambridge, 1993.

\end{thebibliography}
\bibliographystyle{alpha}

\end{document}